\numberwithin{equation}{section}
\newtheorem{Lemma}[equation]{Lemma}
\newtheorem{Theorem}[equation]{Theorem}
\newtheorem{Corollary}[equation]{Corollary}
\theoremstyle{definition}  %% makes all of the theorem environments which follow appear in \rm
\newtheorem{Definition}[equation]{Definition}
\newtheorem{Remark}[equation]{Remark}
\newtheorem{Algorithm}[equation]{Algorithm}
\newtheorem{Example}[equation]{Example}
\newtheorem{Conjecture}[equation]{Conjecture}
\newtheorem*{thmA}{Theorem A}
\newtheorem*{thmB}{Theorem B}
\newcommand\Comment[2][\relax]{\space\par\medskip\noindent%
   \fbox{\begin{minipage}{\textwidth}\textbf{Comment\ifx\relax#1\else---#1\fi}\newline%
        #2\end{minipage}}\medskip
}
\newcommand{\hackcenter}[1]{
 \xy (0,0)*{#1}; \endxy}
\def\b1{\text{\boldmath$1$}}
\def\pmod#1{\text{ }(\text{\rm mod } #1)\,}
\newcommand{\Z}{\mathbb{Z}}
\newcommand{\M}{\textsf{Q}}
\newcommand{\m}{\textsf{q}}
\def\phi{{\varphi}}
\newcommand{\ZZ}{{\mathbb Z}}
\newcommand{\NN}{{\mathbb N}}
\newcommand{\bu}{\boldsymbol{u}}
\newcommand{\bv}{\boldsymbol{v}}
\newcommand{\bw}{\boldsymbol{w}}
\def\b{\mathfrak{b}}
\def\k{\Bbbk}
\theoremstyle{remark}
\newtheorem{ppp}{Problem}
\newtheorem*{pppp}{Problem 1, \(k=2\)}
  \gdef\set#1{\mathinner{\lbrace\,{\mathcode`\|"8000%
  \let|\midvert #1}\,\rbrace}}
\def\midvert{\egroup\mid\bgroup}
\colorlet{darkgreen}{green!50!black}
\tikzset{dots/.style={very thick,loosely dotted},
         greendot/.style={fill,circle,color=darkgreen,inner sep=1.5pt,outer sep=0}
}
\def\greendot(#1,#2){\node[greendot] at(#1,#2){}}
\newenvironment{braid}{% sets defaults for the braid diagrams
  \begin{tikzpicture}[baseline=6mm,blue,line width=1pt, scale=0.4,
                      draw/.append style={rounded corners},
                      every node/.append style={font=\fontsize{5}{5}\selectfont}]%
  }{\end{tikzpicture}
}
\def\Grid(#1,#2){%  draws a coordinate grid inside a braid diagram
  \draw[very thin,gray,step=2mm] (0,0)grid(#1,#2);
  \draw[very thin,darkgreen,step=10mm] (0,0)grid(#1,#2);
}
\newcommand\Tableau[2][\relax]{
  \begin{tikzpicture}[scale=0.5,draw/.append style={thick,black}]
    \ifx\relax#1\relax%
    \else % shade the boxes in #1
      \foreach\box in {#1} { \filldraw[blue!30]\box+(-.5,-.5)rectangle++(.5,.5); }
    \fi
    \newcount\row\newcount\col
    \row=0
    \foreach \Row in {#2} {
       \col=1
       \foreach\k in \Row {
          \draw(\the\col,\the\row)+(-.5,-.5)rectangle++(.5,.5);
          \draw(\the\col,\the\row)node{\k};
          \global\advance\col by 1
       }
       \global\advance\row by -1
    }
  \end{tikzpicture}
}
\newcommand\YoungDiagram[2][\relax]{
  \begin{tikzpicture}[scale=0.5,draw/.append style={thick,black}]
    \ifx\relax#1\relax%
    \else % shade the boxes in #1
    \foreach\box in {#1} {
      \filldraw[blue!30]\box rectangle ++(1,1);
    }
    \fi
    \newcount\row
    \row=0
    \foreach \col in {#2} {
       \draw(1,\the\row)grid ++(\col,1);
       \global\advance\row by -1
    }
  \end{tikzpicture}
}
\begin{document}

%\Comment[AM]{I've added a \texttt{$\backslash$Comment\{\}} macro to help us write standout notes/comments/queries/etc to each other in the file. To mark them as your comments use something like \texttt{$\backslash$Comment[Sasha]\{\dots\}}  or  \texttt{$\backslash$Comment[Arun]\{\dots\}} etc.}

%%fakesection { title }
\title[Searching for quicksand ideals in partially ordered sets]{{\bf Searching for quicksand ideals in partially ordered sets}}

\author{\sc Alexas Iams}
\address{Washington \& Jefferson College\\ Washington\\ PA~15301, USA}
\email{iamsal@washjeff.edu}

\author{\sc Hannah Johnston}
\address{Washington \& Jefferson College\\ Washington\\ PA~15301, USA}
\email{johnstonhm@washjeff.edu}

\author{\sc Robert Muth}
\address{Department of Mathematics\\ Washington \& Jefferson College\\ Washington\\ PA~15301, USA}
\email{rmuth@washjeff.edu}

%\subjclass[2010]{16G99}

%\thanks{Research supported by the NSF grant DMS-1161094 and the Humboldt Foundation.}

\begin{abstract}

We consider a combinatorial question about searching for an unknown ideal \(\mu\) within a known poset \(\lambda\). Elements of \(\lambda\) may be queried for membership in \(\mu\), but at most \(k\) positive query results are permitted. The goal is to find a search strategy which guarantees a solution in a minimal total number \(\m_k(\lambda)\) of queries. We provide tight bounds for \(\m_k(\lambda)\), and construct optimal search strategies for the case where \(k=2\) and \(\lambda\) is the product poset of totally ordered finite sets, one of which has cardinality not more than six.
\end{abstract}

\maketitle

\section{Introduction}

\subsection{Quicksand puzzle}\label{minepuz}
A surveyor stands in the northeast corner of a rectangular field \(\lambda\) of dimension $m \times n$. In the southwest corner of the field there may exist a rectangular quicksand pit \(\mu\) of unknown dimension $m' \times n'$. The surveyor has \(k\) stones available to toss into the field in order to identify safe and unsafe regions of the field.
\begin{align*}
\begin{array}{ccccc}
\hackcenter{
\begin{tikzpicture}[scale=0.55]
  \draw[white] (6.8, 6.1).. controls ++(-4,0.9) and ++(0,2.5) .. (1.5,2.5);
\fill[fill=white] (0,0)--(-1,0);
%%%BRACES AND LABELS
%
%%%DOTTED LINES FROM BRACES
%%%COLORED BACKGROUNDS FOR SECTIONS
\draw[thick, fill=lightgray!50]  (0,0)--(7,0)--(7,5)--(0,5)--(0,0);
%\draw[thick, fill=blue!40!green!45]  (0,0)--(20,0)--(20,6)--(0,6)--(0,0);
%\draw[  thick, fill=orange!60]  (1,0)--(20,0)--(20,6)--(1,6)--(1,0);  
%\draw[  thick,fill=violet!35]  (3,0)--(20,0)--(20,6)--(3,6)--(3,0);  
%\draw[  thick, fill=cyan!25]  (0,1)--(20,1)--(20,6)--(0,6)--(0,1);  
%\draw[  thick,  fill=lime]  (6,0)--(20,0)--(20,6)--(6,6)--(6,0);  
%\draw[  thick,  fill=yellow]  (0,2)--(20,2)--(20,6)--(0,6)--(0,2);  
%\draw[  thick, fill=pink!70!blue!30]  (4,1)--(20,1)--(20,6)--(4,6)--(4,1);  
%\draw[  thick,  fill=blue!40!green!45]  (3,2)--(20,2)--(20,6)--(3,6)--(3,2);  
%\draw[ thick,  fill=violet!35]  (0,4)--(20,4)--(20,6)--(0,6)--(0,4); 
%\draw[ thick, fill=brown!35]  (11,0)--(20,0)--(20,6)--(11,6)--(11,0); 
%\draw[ thick,  fill=orange!60]  (15,0)--(20,0)--(20,6)--(15,6)--(15,0); 
%\draw[thick,  fill=cyan!25]  (8,2)--(20,2)--(20,6)--(8,6)--(8,2);
%\draw[ thick, fill=lime]  (7,3)--(20,3)--(20,6)--(7,6)--(7,3);
%\draw[ thick, fill=yellow]  (6,4)--(20,4)--(20,6)--(6,6)--(6,4);
%\draw[ thick, fill=pink]  (5,5)--(20,5)--(20,6)--(5,6)--(5,5);
%\draw[ thick, fill=lime]  (20,0)--(23,0)--(23,5)--(20,5)--(20,0); 
%\draw[ thick, fill=yellow]  (17,2)--(23,2)--(23,5)--(17,5)--(17,2); 
%%%BLACK LINES IN SHAPE
%%%NUMBER LABELS AND VERTICAL EDGES FOR SHOTS
%MINES
\draw[thick, decorate, decoration={snake}, red!50] (0,0)--(3,0);
\draw[thick, decorate, decoration={snake}, red!50] (0,0.33)--(3,0.33);
\draw[thick, decorate, decoration={snake}, red!50] (0,0.66)--(3,0.66);
\draw[thick, decorate, decoration={snake}, red!50] (0,1)--(3,1);
\draw[thick, decorate, decoration={snake}, red!50] (0,1.33)--(3,1.33);
\draw[thick, decorate, decoration={snake}, red!50] (0,1.66)--(3,1.66);
\draw[thick, decorate, decoration={snake}, red!50] (0,2)--(3,2);
\draw[thick, decorate, decoration={snake}, red!50] (0,2.33)--(3,2.33);
\draw[thick, decorate, decoration={snake}, red!50] (0,2.66)--(3,2.66);
\draw[thick, decorate, decoration={snake}, red!50] (0,3)--(3,3);
\draw[thick, decorate, decoration={snake}, red!50] (0,3.33)--(3,3.33);
\draw[thick, decorate, decoration={snake}, red!50] (0,3.66)--(3,3.66);
\draw[thick, decorate, decoration={snake}, red!50] (0,4)--(3,4);
\fill[fill=lightgray!50] (0,4)--(3,4)--(3,4.4)--(0,4.4)--(0,0);
\fill[fill=white] (0,0)--(3,0)--(3,-0.2)--(0,-0.2)--(0,0);
\draw[thick, red] (0,0)--(3,0)--(3,4)--(0,4)--(0,0);
\draw[thick, dotted] (0,0)--(0,5);
\draw[thick, dotted] (1,0)--(1,5);
\draw[thick, dotted] (2,0)--(2,5);
\draw[thick, dotted] (3,0)--(3,5);
\draw[thick, dotted] (4,0)--(4,5);
\draw[thick, dotted] (5,0)--(5,5);
\draw[thick, dotted] (6,0)--(6,5);
\draw[thick, dotted] (7,0)--(7,5);
\draw[thick, dotted] (0,1)--(7,1);
\draw[thick, dotted] (0,2)--(7,2);
\draw[thick, dotted] (0,3)--(7,3);
\draw[thick, dotted] (0,4)--(7,4);
\draw[thick, dotted] (0,5)--(7,5);
\draw[thick]  (0,0)--(7,0)--(7,5)--(0,5)--(0,0);
%\draw[thick, red] (0,2)--(2,0);
%\draw[thick, red] (0,3)--(3,0);
%\draw[thick, red] (0,4)--(3,1);
%\draw[thick, red] (1,4)--(3,2);
%\draw[thick, red] (2,4)--(3,3);
%
%\draw[thick, red] (2,0)--(3,1);
%\draw[thick, red] (1,0)--(3,2);
%\draw[thick, red] (0,0)--(3,3);
%\draw[thick, red] (0,1)--(3,4);
%\draw[thick, red] (0,2)--(2,4);
%\draw[thick, red] (0,3)--(1,4);
%
%
\draw[thick, line cap=round] (7.5,5.5)--(7.5,6); %body
\draw[line cap=round] (7.5,5.5)--(7.3, 5.1); %leg
\draw[line cap=round] (7.5,5.5)--(7.7,5.1); %leg
\draw[line cap=round] (7.5,5.8)--(7,6); % arm
\draw[line cap=round] (7.5,5.8)--(8, 6); % arm
\draw[fill=white]  (7.5,6.15) circle (5pt);
 \fill[fill=gray]  (7.95,5.02) ellipse (3pt and 1pt);
\draw[fill=black]  (7.9,5.1) circle (2pt);
 \fill[fill=gray]  (7.95 -0.25,5.02-0.25) ellipse (3pt and 1pt);
\draw[fill=black]  (7.9-0.25,5.1-0.25) circle (2pt);
%\node at (5.5,5.5){ \textcircled{$\scriptstyle{1}$}};
%\node at (6.5,4.5){ \textcircled{$\scriptstyle{2}$}};
%\node at (7.5,3.5){ \textcircled{$\scriptstyle{3}$}};
%\node at (8.5,2.5){ \textcircled{$\scriptstyle{4}$}};
%\node at (0.5,4.5){ \textcircled{$\scriptstyle{5}$}};
%\node at (15.5,0.5){ \textcircled{$\scriptstyle{6}$}};
%\node at (3.5,2.5){ \textcircled{$\scriptstyle{7}$}};
%\node at (11.5,0.5){ \textcircled{$\scriptstyle{8}$}};
%\node at (4.5,1.5){ \textcircled{$\scriptstyle{9}$}};
%\node at (0.5,2.5){ \textcircled{$\scriptstyle{10}$}};
%\node at (6.5,0.5){ \textcircled{$\scriptstyle{11}$}};
%\node at (0.5,1.5){ \textcircled{$\scriptstyle{12}$}};
%\node at (3.5,0.5){ \textcircled{$\scriptstyle{13}$}};
%\node at (1.5,0.5){ \textcircled{$\scriptstyle{14}$}};
%\node at (0.5,0.5){ \textcircled{$\scriptstyle{15}$}};
%
%\node at (12.5,2.5){ $\lambda'$};
%
%\node[] at (18,4.5) {$2$};
\end{tikzpicture}
}
&
\hackcenter{
\begin{tikzpicture}[scale=0.55]
\fill[fill=white] (0,0)--(-1,0);
%%%BRACES AND LABELS
%
%%%DOTTED LINES FROM BRACES
%%%COLORED BACKGROUNDS FOR SECTIONS
  \draw[white] (6.8, 6.1).. controls ++(-4,0.9) and ++(0,2.5) .. (1.5,2.5);
\draw[thick, fill=lightgray!50]  (0,0)--(7,0)--(7,5)--(0,5)--(0,0);
\draw[thick, fill=lime]  (4,1)--(7,1)--(7,5)--(4,5)--(4,1);
%\draw[thick, fill=blue!40!green!45]  (0,0)--(20,0)--(20,6)--(0,6)--(0,0);
%\draw[  thick, fill=orange!60]  (1,0)--(20,0)--(20,6)--(1,6)--(1,0);  
%\draw[  thick,fill=violet!35]  (3,0)--(20,0)--(20,6)--(3,6)--(3,0);  
%\draw[  thick, fill=cyan!25]  (0,1)--(20,1)--(20,6)--(0,6)--(0,1);  
%\draw[  thick,  fill=lime]  (6,0)--(20,0)--(20,6)--(6,6)--(6,0);  
%\draw[  thick,  fill=yellow]  (0,2)--(20,2)--(20,6)--(0,6)--(0,2);  
%\draw[  thick, fill=pink!70!blue!30]  (4,1)--(20,1)--(20,6)--(4,6)--(4,1);  
%\draw[  thick,  fill=blue!40!green!45]  (3,2)--(20,2)--(20,6)--(3,6)--(3,2);  
%\draw[ thick,  fill=violet!35]  (0,4)--(20,4)--(20,6)--(0,6)--(0,4); 
%\draw[ thick, fill=brown!35]  (11,0)--(20,0)--(20,6)--(11,6)--(11,0); 
%\draw[ thick,  fill=orange!60]  (15,0)--(20,0)--(20,6)--(15,6)--(15,0); 
%\draw[thick,  fill=cyan!25]  (8,2)--(20,2)--(20,6)--(8,6)--(8,2);
%\draw[ thick, fill=lime]  (7,3)--(20,3)--(20,6)--(7,6)--(7,3);
%\draw[ thick, fill=yellow]  (6,4)--(20,4)--(20,6)--(6,6)--(6,4);
%\draw[ thick, fill=pink]  (5,5)--(20,5)--(20,6)--(5,6)--(5,5);
%\draw[ thick, fill=lime]  (20,0)--(23,0)--(23,5)--(20,5)--(20,0); 
%\draw[ thick, fill=yellow]  (17,2)--(23,2)--(23,5)--(17,5)--(17,2); 
%%%BLACK LINES IN SHAPE
\draw[thick, decorate, decoration={snake}, red!50] (0,0)--(3,0);
\draw[thick, decorate, decoration={snake}, red!50] (0,0.33)--(3,0.33);
\draw[thick, decorate, decoration={snake}, red!50] (0,0.66)--(3,0.66);
\draw[thick, decorate, decoration={snake}, red!50] (0,1)--(3,1);
\draw[thick, decorate, decoration={snake}, red!50] (0,1.33)--(3,1.33);
\draw[thick, decorate, decoration={snake}, red!50] (0,1.66)--(3,1.66);
\draw[thick, decorate, decoration={snake}, red!50] (0,2)--(3,2);
\draw[thick, decorate, decoration={snake}, red!50] (0,2.33)--(3,2.33);
\draw[thick, decorate, decoration={snake}, red!50] (0,2.66)--(3,2.66);
\draw[thick, decorate, decoration={snake}, red!50] (0,3)--(3,3);
\draw[thick, decorate, decoration={snake}, red!50] (0,3.33)--(3,3.33);
\draw[thick, decorate, decoration={snake}, red!50] (0,3.66)--(3,3.66);
\draw[thick, decorate, decoration={snake}, red!50] (0,4)--(3,4);
\fill[fill=lightgray!50] (0,4)--(3,4)--(3,4.4)--(0,4.4)--(0,0);
\fill[fill=white] (0,0)--(3,0)--(3,-0.2)--(0,-0.2)--(0,0);
\draw[thick, red] (0,0)--(3,0)--(3,4)--(0,4)--(0,0);
\draw[thick, dotted] (0,0)--(0,5);
\draw[thick, dotted] (1,0)--(1,5);
\draw[thick, dotted] (2,0)--(2,5);
\draw[thick, dotted] (3,0)--(3,5);
\draw[thick, dotted] (4,0)--(4,5);
\draw[thick, dotted] (5,0)--(5,5);
\draw[thick, dotted] (6,0)--(6,5);
\draw[thick, dotted] (7,0)--(7,5);
\draw[thick, dotted] (0,1)--(7,1);
\draw[thick, dotted] (0,2)--(7,2);
\draw[thick, dotted] (0,3)--(7,3);
\draw[thick, dotted] (0,4)--(7,4);
\draw[thick, dotted] (0,5)--(7,5);
\draw[thick]  (0,0)--(7,0)--(7,5)--(0,5)--(0,0);
\draw[thick, line cap=round] (7.5,5.5)--(7.5,6); %body
\draw[line cap=round] (7.5,5.5)--(7.3, 5.1); %leg
\draw[line cap=round] (7.5,5.5)--(7.7,5.1); %leg
\draw[line cap=round] (7.5,5.8)--(7,6); % arm
\draw[line cap=round] (7.5,5.8)--(8, 6); % arm
\draw[fill=white]  (7.5,6.15) circle (5pt);
 \fill[fill=gray]  (7.95,5.02) ellipse (3pt and 1pt);
\draw[fill=black]  (7.9,5.1) circle (2pt);
%\node at (5.5,5.5){ \textcircled{$\scriptstyle{1}$}};
%\node at (6.5,4.5){ \textcircled{$\scriptstyle{2}$}};
%\node at (7.5,3.5){ \textcircled{$\scriptstyle{3}$}};
%\node at (8.5,2.5){ \textcircled{$\scriptstyle{4}$}};
%\node at (0.5,4.5){ \textcircled{$\scriptstyle{5}$}};
%\node at (15.5,0.5){ \textcircled{$\scriptstyle{6}$}};
%\node at (3.5,2.5){ \textcircled{$\scriptstyle{7}$}};
%\node at (11.5,0.5){ \textcircled{$\scriptstyle{8}$}};
%\node at (4.5,1.5){ \textcircled{$\scriptstyle{9}$}};
%\node at (0.5,2.5){ \textcircled{$\scriptstyle{10}$}};
%\node at (6.5,0.5){ \textcircled{$\scriptstyle{11}$}};
%\node at (0.5,1.5){ \textcircled{$\scriptstyle{12}$}};
%\node at (3.5,0.5){ \textcircled{$\scriptstyle{13}$}};
%\node at (1.5,0.5){ \textcircled{$\scriptstyle{14}$}};
%\node at (0.5,0.5){ \textcircled{$\scriptstyle{15}$}};
%
%\node at (12.5,2.5){ $\lambda'$};
%
%\node[] at (18,4.5) {$2$};
  \draw[dashed, blue] (6.8, 6.1).. controls ++(-2,0.9) and ++(0,2.5) .. (4.5,1.7);
% \draw[>>->] (21,6.6)--(20.9,7.3);
 %
 %
 %
  \fill[fill=gray]  (4.5+0.05,1.5-0.08) ellipse (3pt and 1pt);
\draw[fill=black]  (4.5,1.5) circle (2pt);
\draw[thick]  (4,1)--(7,1)--(7,5)--(4,5)--(4,1);
\end{tikzpicture}
}
&
\hackcenter{
\begin{tikzpicture}[scale=0.55]
\fill[fill=white] (0,0)--(-1,0);
%%%BRACES AND LABELS
%
%%%DOTTED LINES FROM BRACES
%%%COLORED BACKGROUNDS FOR SECTIONS
  \draw[white] (6.8, 6.1).. controls ++(-4,0.9) and ++(0,2.5) .. (1.5,2.5);
\draw[thick, fill=lightgray!50]  (0,0)--(7,0)--(7,5)--(0,5)--(0,0);
\draw[thick, fill=orange!60]  (0,0)--(2,0)--(2,3)--(0,3)--(0,0);
%\draw[thick, fill=blue!40!green!45]  (0,0)--(20,0)--(20,6)--(0,6)--(0,0);
%\draw[  thick, fill=orange!60]  (1,0)--(20,0)--(20,6)--(1,6)--(1,0);  
%\draw[  thick,fill=violet!35]  (3,0)--(20,0)--(20,6)--(3,6)--(3,0);  
%\draw[  thick, fill=cyan!25]  (0,1)--(20,1)--(20,6)--(0,6)--(0,1);  
%\draw[  thick,  fill=lime]  (6,0)--(20,0)--(20,6)--(6,6)--(6,0);  
%\draw[  thick,  fill=yellow]  (0,2)--(20,2)--(20,6)--(0,6)--(0,2);  
%\draw[  thick, fill=pink!70!blue!30]  (4,1)--(20,1)--(20,6)--(4,6)--(4,1);  
%\draw[  thick,  fill=blue!40!green!45]  (3,2)--(20,2)--(20,6)--(3,6)--(3,2);  
%\draw[ thick,  fill=violet!35]  (0,4)--(20,4)--(20,6)--(0,6)--(0,4); 
%\draw[ thick, fill=brown!35]  (11,0)--(20,0)--(20,6)--(11,6)--(11,0); 
%\draw[ thick,  fill=orange!60]  (15,0)--(20,0)--(20,6)--(15,6)--(15,0); 
%\draw[thick,  fill=cyan!25]  (8,2)--(20,2)--(20,6)--(8,6)--(8,2);
%\draw[ thick, fill=lime]  (7,3)--(20,3)--(20,6)--(7,6)--(7,3);
%\draw[ thick, fill=yellow]  (6,4)--(20,4)--(20,6)--(6,6)--(6,4);
%\draw[ thick, fill=pink]  (5,5)--(20,5)--(20,6)--(5,6)--(5,5);
%\draw[ thick, fill=lime]  (20,0)--(23,0)--(23,5)--(20,5)--(20,0); 
%\draw[ thick, fill=yellow]  (17,2)--(23,2)--(23,5)--(17,5)--(17,2); 
%%%BLACK LINES IN SHAPE
\draw[thick, decorate, decoration={snake}, red!50] (0,0)--(3,0);
\draw[thick, decorate, decoration={snake}, red!50] (0,0.33)--(3,0.33);
\draw[thick, decorate, decoration={snake}, red!50] (0,0.66)--(3,0.66);
\draw[thick, decorate, decoration={snake}, red!50] (0,1)--(3,1);
\draw[thick, decorate, decoration={snake}, red!50] (0,1.33)--(3,1.33);
\draw[thick, decorate, decoration={snake}, red!50] (0,1.66)--(3,1.66);
\draw[thick, decorate, decoration={snake}, red!50] (0,2)--(3,2);
\draw[thick, decorate, decoration={snake}, red!50] (0,2.33)--(3,2.33);
\draw[thick, decorate, decoration={snake}, red!50] (0,2.66)--(3,2.66);
\draw[thick, decorate, decoration={snake}, red!50] (0,3)--(3,3);
\draw[thick, decorate, decoration={snake}, red!50] (0,3.33)--(3,3.33);
\draw[thick, decorate, decoration={snake}, red!50] (0,3.66)--(3,3.66);
\draw[thick, decorate, decoration={snake}, red!50] (0,4)--(3,4);
\fill[fill=lightgray!50] (0,4)--(3,4)--(3,4.4)--(0,4.4)--(0,0);
\fill[fill=white] (0,0)--(3,0)--(3,-0.2)--(0,-0.2)--(0,0);
\draw[thick, red] (0,0)--(3,0)--(3,4)--(0,4)--(0,0);
\draw[thick, dotted] (0,0)--(0,5);
\draw[thick, dotted] (1,0)--(1,5);
\draw[thick, dotted] (2,0)--(2,5);
\draw[thick, dotted] (3,0)--(3,5);
\draw[thick, dotted] (4,0)--(4,5);
\draw[thick, dotted] (5,0)--(5,5);
\draw[thick, dotted] (6,0)--(6,5);
\draw[thick, dotted] (7,0)--(7,5);
\draw[thick, dotted] (0,1)--(7,1);
\draw[thick, dotted] (0,2)--(7,2);
\draw[thick, dotted] (0,3)--(7,3);
\draw[thick, dotted] (0,4)--(7,4);
\draw[thick, dotted] (0,5)--(7,5);
\draw[thick]  (0,0)--(7,0)--(7,5)--(0,5)--(0,0);%
\draw[thick, line cap=round] (7.5,5.5)--(7.5,6); %body
\draw[line cap=round] (7.5,5.5)--(7.3, 5.1); %leg
\draw[line cap=round] (7.5,5.5)--(7.7,5.1); %leg
\draw[line cap=round] (7.5,5.8)--(7,6); % arm
\draw[line cap=round] (7.5,5.8)--(8, 6); % arm
\draw[fill=white]  (7.5,6.15) circle (5pt);
 \fill[fill=gray]  (7.95,5.02) ellipse (3pt and 1pt);
\draw[fill=black]  (7.9,5.1) circle (2pt);
%
%\node at (5.5,5.5){ \textcircled{$\scriptstyle{1}$}};
%\node at (6.5,4.5){ \textcircled{$\scriptstyle{2}$}};
%\node at (7.5,3.5){ \textcircled{$\scriptstyle{3}$}};
%\node at (8.5,2.5){ \textcircled{$\scriptstyle{4}$}};
%\node at (0.5,4.5){ \textcircled{$\scriptstyle{5}$}};
%\node at (15.5,0.5){ \textcircled{$\scriptstyle{6}$}};
%\node at (3.5,2.5){ \textcircled{$\scriptstyle{7}$}};
%\node at (11.5,0.5){ \textcircled{$\scriptstyle{8}$}};
%\node at (4.5,1.5){ \textcircled{$\scriptstyle{9}$}};
%\node at (0.5,2.5){ \textcircled{$\scriptstyle{10}$}};
%\node at (6.5,0.5){ \textcircled{$\scriptstyle{11}$}};
%\node at (0.5,1.5){ \textcircled{$\scriptstyle{12}$}};
%\node at (3.5,0.5){ \textcircled{$\scriptstyle{13}$}};
%\node at (1.5,0.5){ \textcircled{$\scriptstyle{14}$}};
%\node at (0.5,0.5){ \textcircled{$\scriptstyle{15}$}};
%
%\node at (12.5,2.5){ $\lambda'$};
%
%\node[] at (18,4.5) {$2$};
  \draw[dashed, blue] (6.8, 6.1).. controls ++(-4,0.9) and ++(0,2.5) .. (1.5,2.5);
  %\draw[] (1.3,2.5)--(1.1,2.6);
   % \draw[] (1.7,2.5)--(1.9,2.6);
      %\draw[] (1.4,2.7)--(1.3,2.9);
        % \draw[] (1.6,2.7)--(1.7,2.9);
         \draw[black]  (1.5,2.5) circle (3pt);
          \draw[black]  (1.5,2.5) circle (6pt);
          \draw[black]  (1.5,2.5) circle (9pt);
% \draw[>>->] (21,6.6)--(20.9,7.3);
 %
 %
 %
 \draw[thick]  (0,0)--(2,0)--(2,3)--(0,3)--(0,0);
\end{tikzpicture}
}
\\
\scriptstyle{\textup{{ \(5 \times 7\) field, \(4 \times 3\) quicksand pit}}} 
&\scriptstyle{\textup{{ Stable stone, safe region}}}
& \scriptstyle{\textup{{ Sunken stone, unsafe region}}}
\end{array}
\end{align*}
\vspace{0.5mm}

In order to gain information, the surveyor tosses a stone into some location \(x\) in the field. If the stone does not sink, it follows that the region northeast of \(x\) is safe; the surveyor can venture into the field to retrieve the stone and use it again. If the stone does sink, the surveyor knows that the quicksand pit extends at least as far as \(x\), but they now have one less stone with which to work. How can the surveyor identify the location of the quicksand pit, and do so in a minimal number of tosses?

\subsection{Quicksand ideals in posets}\label{QP}
As we explain in \S\ref{NN}, this puzzle is a special case of a more general problem.
Let \(\lambda\) be a finite poset and \(k \in \NN\). We seek to identify a (possibly empty) `quicksand' ideal \(\mu\) contained in \(\lambda\) by sequentially querying elements of \(\lambda\) for membership in \(\mu\), under the restriction that at most \(k\) positive query results are permitted. Letting \(\m_k(\lambda)\) represent the minimum total number of queries needed to guarantee identification of \(\mu\), our goal is to solve:

\begin{ppp}\label{P1}
Find the value \(\m_k(\lambda)\), and identify a search strategy which realizes this value.
\end{ppp}

For all \(k \in \NN\), the value \(\m_k(\lambda)\) has a recursive combinatorial description, as explained in \S\ref{qkexp}:
\begin{align*}
\m_k(\lambda) = 
\begin{cases}
0 &\textup{if }\lambda = \varnothing;\\
|\lambda| &\textup{if }k = 1;\\
\min \{ \max\{ \m_k(\lambda_{\not \succeq u}) , \m_{k-1}(\lambda_{\succ u})\}\mid u \in \lambda\}+1
&  \textup{if }k >1, \lambda \neq \varnothing,
\end{cases}
\end{align*}

For any \(x \in \ZZ_{\geq 0}\), let
\(
T_k(x) = \sum_{i=1}^k {x \choose k},
\)
and let \(\tau_k(x)\) be the smallest integer such that \(x \leq T_k(\tau_k(x))\). Our first main result provides bounds for \(\m_k(\lambda)\):
\begin{thmA}
For all \(k \in \NN\) and posets \(\lambda\), we have \(\tau_k(|\lambda|) \leq \m_k(\lambda) \leq |\lambda|\).
\end{thmA}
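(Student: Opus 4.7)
The plan is to handle the two bounds separately, in both cases by induction on $|\lambda|$ using the recursive formula for $\m_k(\lambda)$. The upper bound $\m_k(\lambda) \leq |\lambda|$ is the straightforward one: the cases $\lambda = \varnothing$ and $k = 1$ are immediate from the recursion, and for $k > 1$ with $\lambda \neq \varnothing$ I would choose $u$ to be a \emph{maximal} element of $\lambda$, so that $\lambda_{\succ u} = \varnothing$ and hence $\m_{k-1}(\lambda_{\succ u}) = 0$. Since $|\lambda_{\not\succeq u}| = |\lambda| - 1$, the recursion combined with the inductive hypothesis yields $\m_k(\lambda) \leq \m_k(\lambda_{\not\succeq u}) + 1 \leq |\lambda|$.

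For the lower bound, first observe that $T_k$ is strictly increasing on $\ZZ_{\geq 0}$, so the claim $\tau_k(|\lambda|) \leq \m_k(\lambda)$ is equivalent to $|\lambda| \leq T_k(\m_k(\lambda))$. I prove the latter by induction on $|\lambda|$. The cases $\lambda = \varnothing$ (using $T_k(0) = 0$) and $k = 1$ (using $T_1(n) = n$ and $\m_1(\lambda) = |\lambda|$) are direct. Otherwise, set $q = \m_k(\lambda)$ and let $u \in \lambda$ achieve the minimum in the recursion, so that $\m_k(\lambda_{\not\succeq u}) \leq q - 1$ and $\m_{k-1}(\lambda_{\succ u}) \leq q - 1$. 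Both $\lambda_{\not\succeq u}$ and $\lambda_{\succ u}$ are strictly smaller than $\lambda$ (each excludes $u$), so the inductive hypothesis, applied once with stone count $k$ and once with stone count $k-1$, gives $|\lambda_{\not\succeq u}| \leq T_k(q-1)$ and $|\lambda_{\succ u}| \leq T_{k-1}(q-1)$. Since $\lambda = \lambda_{\not\succeq u} \sqcup \{u\} \sqcup \lambda_{\succ u}$, summing these yields $|\lambda| \leq T_k(q-1) + 1 + T_{k-1}(q-1)$.

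The proof closes with a short Pascal calculation:
\[T_k(q) - T_k(q-1) \;=\; \sum_{i=1}^{k}\left[\binom{q}{i} - \binom{q-1}{i}\right] \;=\; \sum_{i=1}^{k}\binom{q-1}{i-1} \;=\; 1 + T_{k-1}(q-1),\]
so the previous inequality rearranges to $|\lambda| \leq T_k(q) = T_k(\m_k(\lambda))$, as required. There is no substantive obstacle in the argument; the main points worth emphasizing are the choice of $u$ in each case (a maximal element for the upper bound, an optimal element of the recursion for the lower bound) and the recognition that the induction step is exactly Pascal's rule summed over $i$, which is precisely why the partial-row sums $T_k$ appear as the correct bounding function.
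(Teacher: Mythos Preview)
Your argument is correct. The upper bound is handled exactly as in the paper: pick a maximal element so that $\lambda_{\succ u}=\varnothing$, and apply induction.

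For the lower bound you take a cleaner, genuinely different route. The paper keeps the statement in the form $\tau_k(|\lambda|)\le \m_k(\lambda)$, argues by contradiction, and needs the auxiliary Lemma~\ref{WrongWayLemmaBig} to control $\tau_k$ after subtracting $|\lambda_{\succ u}|+1$ from $|\lambda|$. You instead reformulate the claim as $|\lambda|\le T_k(\m_k(\lambda))$ and prove this directly: the optimal $u$ gives $\m_k(\lambda_{\not\succeq u}),\,\m_{k-1}(\lambda_{\succ u})\le q-1$, induction bounds the two pieces of the partition $\lambda=\lambda_{\not\succeq u}\sqcup\{u\}\sqcup\lambda_{\succ u}$ by $T_k(q-1)$ and $T_{k-1}(q-1)$, and Pascal's identity (which is exactly Lemma~\ref{indTk}) closes the step. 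This is shorter and avoids both the contradiction and Lemma~\ref{WrongWayLemmaBig}; the paper's approach has the minor advantage of staying entirely within the language of $\tau_k$, but your rephrasing in terms of $T_k$ is what makes the induction additive and transparent.
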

This appears as Theorem~\ref{BOUNDS} in the text.
These bounds are tight, in that \(\m_k(\lambda) = |\lambda|\) when \(\lambda\) has the trivial partial order, and \(\m_k(\lambda) = \tau_k(|\lambda|)\) when \(\lambda\) is totally ordered. In fact, when \(\lambda\) is totally ordered, Problem~\ref{P1} is related to the `\(k\)-egg' or `\(k\)-marble' problem \cite{Egg1, Egg2, Egg3, marbles}, which appears in numerous texts on dynamic programming and optimization, and perhaps apocryphally, as an interview question for certain coding positions in big tech.

\subsection{Quicksand ideals in the product order, \(k=2\) case}\label{NN}
After investigating general results described in \S\ref{QP}, we devote our attention to a special case of Problem 1. 
When \(\kappa, \nu\) are totally ordered sets, we consider \(\kappa \times \nu\) to be a poset under the {\em product partial order}; i.e.,
\begin{align*}
(x_1, y_1) \succeq (x_2, y_2) \iff x_1 \geq x_2 \textup{ and } y_1 \geq y_2,
\end{align*}
for \(x_1, x_2 \in \kappa\) and \(y_1, y_2 \in \nu\).
We consider the \(k=2\) case, where \(T_2(x)\) is the triangular number \(1 + 2 + \cdots + x = x(x+1)/2\), and \(\tau_2(x) = \lceil (\sqrt{8x + 1} -1)/2 \rceil\). 
Our second main result, which appears as Corollary~\ref{maincor} in the text, provides a partial solution to Problem~\ref{P1} in this setting:

\begin{thmB}
Let \(\kappa, \nu\) be finite totally ordered sets, with \(|\kappa| \leq 6\) or \(|\nu| \leq 6\). Then
\begin{align*}
\m_2(\kappa \times \nu) = \begin{cases}
9 & \textup{if }|\kappa|=|\nu| =6;\\
\tau_2(|\kappa| |\nu|) & \textup{otherwise}.
\end{cases}
\end{align*}
\end{thmB}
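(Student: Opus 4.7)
The plan is to separately establish the matching lower and upper bounds. The lower bound $\m_2(\kappa \times \nu) \geq \tau_2(|\kappa||\nu|)$ is immediate from Theorem A in all cases. For the upper bound in the non-exceptional cases, the plan is to construct explicit search strategies via the recursive formula, choosing a first query point $u$ such that $|\lambda_{\succ u}|$ and $\m_2(\lambda_{\not\succeq u})$ are each at most $\tau_2(|\kappa||\nu|) - 1$. This is naturally organized by induction on $|\nu|$, assuming WLOG $|\kappa| \leq 6$, with case analysis on $m = |\kappa| \in \{1, \dots, 6\}$. The base case $m = 1$ is a totally ordered set, which reduces to the classical two-marble problem. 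For $m \geq 2$, a suitable $u = (i,j)$ must balance the two branches, where the choice depends on the position of $mn$ relative to the nearest triangular number. A complication is that the negative branch produces an L-shaped subposet rather than a product of chains, so the strategy must be continued on a non-rectangular shape, typically via another carefully chosen query followed by Theorem A.

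For the exceptional $6 \times 6$ case, we need matching arguments: an explicit nine-query strategy (which follows from the general construction with any reasonable first choice) together with a proof that no eight-query strategy exists. For the latter, I would apply the recursive formula to show that for every first query $u = (i,j)$, either $|\lambda_{\succ u}| \geq 8$ or $\m_2(\lambda_{\not\succeq u}) \geq 8$. Since $|\lambda_{\succ u}| = (7-i)(7-j) - 1$, the first inequality holds whenever $(7-i)(7-j) \geq 9$. When $(7-i)(7-j) \leq 7$ we have $|\lambda_{\not\succeq u}| \geq 29$, so Theorem A yields $\m_2(\lambda_{\not\succeq u}) \geq \tau_2(29) = 8$. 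The only remaining first queries are those with $(7-i)(7-j) = 8$, namely $u = (5,3)$ and (by symmetry) $u = (3,5)$.

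The main obstacle is handling these two boundary cases. For $u = (5,3)$, the poset $\lambda_{\not\succeq u}$ is a 28-element L-shape (the $6\times 6$ grid with a $2 \times 4$ upper corner removed), and Theorem A gives only $\m_2(\lambda_{\not\succeq u}) \geq \tau_2(28) = 7$. To upgrade this to 8, I would reapply the recursive formula one level deeper: for every candidate second query $v$ inside the L-shape, show that $\max\{|(\lambda_{\not\succeq u})_{\succ v}|, \m_2((\lambda_{\not\succeq u})_{\not \succeq v})\} \geq 7$. Theorem A dispatches most $v$ immediately, after which a short list of boundary points must be verified by hand, exploiting the reflection symmetry of the L-shape. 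Executing this nested case analysis, and cross-checking that it dovetails with the upper-bound constructions described above so that the $m \leq 6$ induction closes cleanly on all product grids except $6 \times 6$, is the technically delicate heart of the proof.
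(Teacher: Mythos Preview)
Your framework is right---lower bound from Theorem~A, upper bound by explicit strategies, induction on size---but the upper-bound plan has a genuine gap. You propose to choose a single first query $u$ and then ``continue on a non-rectangular shape, typically via another carefully chosen query followed by Theorem~A.'' That is precisely where the difficulty lies, and the proposal does not say how to do it. After one query the remainder $\lambda_{\not\succeq u}$ is an L-shape, and you have no inductive hypothesis for L-shapes; Theorem~A alone is too weak to close the gap in most cases because $\tau_2(|\lambda_{\not\succeq u}|)$ can easily fall below $\tau_2(mn)-1$ only when $|\lambda_{\succeq u}|$ is large, which forces $|\lambda_{\succ u}|$ to be large too.

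The paper's key idea, which your proposal is missing, is to commit to a \emph{multi-step} initial segment $\bv=(v_1,\dots,v_s)$ (sometimes of length up to $9$) chosen so that $\lambda_{\not\succeq\bv}$ is again a rectangle $\llbracket m',n'\rrbracket$ with $m'\le 6$ or $n'\le 6$. This restores the inductive hypothesis cleanly. The price is that the construction of $\bv$ is delicate: it depends on the congruence class of $t=\tau_2(mn)$ modulo $m$, and in some rows modulo $2m$, $6$, $12$, or even $30$, and one must verify both that each $|\lambda_{\bv}^{(i)}|\le t-i+1$ and that the leftover rectangle is small enough that $\tau_2$ of its size is at most $t-s$. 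The paper packages these as conditions (C1)--(C3) and checks them case by case using the number-theoretic lemmas on triangular numbers in \S\ref{TriResults}. Your sketch gives no indication of this structure, and without it the induction does not close.

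For the $6\times6$ lower bound, your reduction to $u\in\{(5,3),(3,5)\}$ is correct, but the claim that one more level of the recursion plus ``a short list of boundary points'' suffices is optimistic: for $u=(5,3)$ and then $v=(4,2)$ one lands on a $21$-element poset where $\tau_2(21)=6$, so Theorem~A still does not give $7$ and a further descent is needed. The paper simply asserts an exhaustive check here, so your approach is in principle more informative, but you have not actually carried it out.
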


In Algorithm~\ref{Alg} we describe an explicit strategy, for any such \(\kappa,\nu\), which realizes the value \(\m_2(\kappa \times \nu)\) above. In general, this strategy---and hence the proof of Theorem B---is rather delicately connected to the congruence class of \(\tau_2(|\kappa| |\nu|)\) modulo \(|\kappa|\) and \(|\nu|\), and relies heavily on some interesting number theoretic facts about triangular numbers proved in \S\ref{TriResults}. We close the paper with a conjectural upper bound on \(\m_2(\kappa \times \nu)\) in general, see \S\ref{conjsec}.

\subsection{Solving the quicksand puzzle}
Theorem B offers a solution to the puzzle in \S\ref{minepuz} for the case where \(k=2\) and one dimension of the field is not more than six. Indeed, we may consider the field \(\lambda\) as the poset \([1,m] \times [1,n]\), depicted as a rectangular array of boxes in the first quadrant of the Cartesian plane. The quicksand pit is then an unknown ideal in \(\lambda\), since any ideal \(\mu \subseteq \lambda\) is either empty or equal to \([1,m'] \times [1,n']\) for some \(m' \leq m\), \(n' \leq n\).

Take the \(k=2\), \(\lambda = [1,5] \times [1,7]\) example from \S\ref{minepuz} for instance. Algorithm~\ref{Alg} returns an optimal strategy displayed below. 
\begin{align*}
\begin{array}{ccccc}
\hackcenter{
\begin{tikzpicture}[scale=0.55]
%%%BRACES AND LABELS
%
%%%DOTTED LINES FROM BRACES
%%%COLORED BACKGROUNDS FOR SECTIONS
\fill[fill=white] (0,0)--(3,0)--(3,-0.2)--(0,-0.2)--(0,0);
\draw[thick, fill=lightgray!50]  (0,0)--(7,0)--(7,5)--(0,5)--(0,0);
\draw[thick, fill=brown!35]  (0,0)--(7,0)--(7,5)--(0,5)--(0,0);
\draw[thick, fill=blue!40!green!45]  (0,1)--(7,1)--(7,5)--(0,5)--(0,1);
\draw[thick, fill=orange!60]  (1,0)--(7,0)--(7,5)--(1,5)--(1,0);
\draw[thick,  fill=violet!35]  (0,3)--(7,3)--(7,5)--(0,5)--(0,3);
\draw[thick, fill=cyan!25]  (2,0)--(7,0)--(7,5)--(2,5)--(2,0);
\draw[thick, fill=lime]  (3,0)--(7,0)--(7,5)--(3,5)--(3,0);
\draw[thick, fill=yellow]  (4,1)--(7,1)--(7,5)--(4,5)--(4,1);
\draw[thick, fill=pink]  (3,3)--(7,3)--(7,5)--(3,5)--(3,3);
%MINES
%
\draw[thick, dotted] (0,0)--(0,5);
\draw[thick, dotted] (1,0)--(1,5);
\draw[thick, dotted] (2,0)--(2,5);
\draw[thick, dotted] (3,0)--(3,5);
\draw[thick, dotted] (4,0)--(4,5);
\draw[thick, dotted] (5,0)--(5,5);
\draw[thick, dotted] (6,0)--(6,5);
\draw[thick, dotted] (7,0)--(7,5);
\draw[thick, dotted] (0,1)--(7,1);
\draw[thick, dotted] (0,2)--(7,2);
\draw[thick, dotted] (0,3)--(7,3);
\draw[thick, dotted] (0,4)--(7,4);
\draw[thick, dotted] (0,5)--(7,5);
\draw[thick]  (0,0)--(7,0)--(7,5)--(0,5)--(0,0);
\node at (3.5,3.5){ \textcircled{$\scriptstyle{1}$}};
\node at (4.5,1.5){ \textcircled{$\scriptstyle{2}$}};
\node at (3.5,0.5){ \textcircled{$\scriptstyle{3}$}};
\node at (2.5,0.5){ \textcircled{$\scriptstyle{4}$}};
\node at (0.5,3.5){ \textcircled{$\scriptstyle{5}$}};
\node at (1.5,0.5){ \textcircled{$\scriptstyle{6}$}};
\node at (0.5,1.5){ \textcircled{$\scriptstyle{7}$}};
\node at (0.5,0.5){ \textcircled{$\scriptstyle{8}$}};
\end{tikzpicture}
}
\\
\scriptstyle{\textup{{Strategy for \(5 \times 7\) field}}} 
\end{array}
\end{align*}
The surveyor tosses their first stone into the locations marked \(\textcircled{$\scriptstyle{1}$}, \textcircled{$\scriptstyle{2}$},  \textcircled{$\scriptstyle{3}$}, \ldots, \) in sequence.
If this stone never sinks, then \(\mu = \varnothing\). If the stone sinks on say, the \(i\)th toss, the remaining uncleared area weakly northeast of this location (belonging to the same colored region as \(\textcircled{$\scriptstyle{i}$}\)), is checked sequentially with the remaining stone, in a southwesterly fashion. When the second stone sinks it will determine the northeast corner of \(\mu\), and if it never sinks, the northeast corner of \(\mu\) is at \(\textcircled{$\scriptstyle{i}$}\). This strategy identifies the quicksand pit \(\mu\) in at most \(\tau_2(5\cdot 7) =8\) total tosses.

%%%%%%%%%%%%%%%%%%%%%%%%%%%%%%%%%

\section{Partially ordered sets}\label{YDSEC}

In this section we give a brief primer on partially ordered sets and provide some preliminary definitions. See \cite{Dushnik, Davey} for a complete treatment of the subject. We introduce the \(\m_k\)-function which is the central topic of this paper, and explain how it relates to Problem~\ref{P1}.

\subsection{Posets}
A {\em partially ordered set} (or {\em poset}) is a set \(\lambda\) together with a binary relation \(\succeq\), which satisfies the following conditions for all \(u,v,w \in \lambda\):
\begin{enumerate}
\item  \(u \succeq u\) ({\em reflexivity});
\item \(u \succeq v\) and \(v\succeq u\) imply \(u = v\) ({\em antisymmetricity});
\item \(u \succeq v\) and \(v \succeq w\) imply \(u \succeq w\) ({\em transitivity}).
\end{enumerate}
We use \(a \succ b\) to indicate \(a \succeq b\) and \(a \neq b\). The order \(\succeq\) is a {\em total order} if either \(u \succeq v\) or \(v \succeq u\) for all \(u \in v\). An {\em order-preserving} map of posets \(\lambda, \nu\) is a set map \(f: \lambda \to \nu\) such that \(f(u) \succeq f(v)\) whenever \(u \succeq v\). We say two posets \(\lambda, \nu\) are {\em isomorphic} and write \(\lambda \cong \nu\) if there exist mutually inverse order-preserving maps \(\lambda \rightleftarrows \nu\).

If \(\kappa, \nu\) are posets, then \(\kappa \times \nu\) is a poset under the {\em product partial order}:
\begin{align*}
(x_1, y_1) \succeq (x_2, y_2) \iff x_1 \succeq_\kappa x_2 \textup{ and } y_1 \succeq_\nu y_2,
\end{align*}
for all \(x_1, x_2 \in \kappa\) and \(y_1, y_2 \in \nu\). Our main examples of posets in this paper are the following:

\begin{Example}
The {\em trivial partial order} on a set \(\lambda\) has \(u \succeq v\) if and only if \(u = v\) for all \(u,v \in \lambda\). 
\end{Example}

\begin{Example}\label{TotEx}
The natural numbers \(\NN = \{1, 2, \ldots \}\) are totally ordered under the usual \(\geq\) relation, as is any interval \([a,b] = \{a, a+1, \ldots, b\} \subset \NN\). In fact, if \(\lambda\) is any finite totally ordered set of cardinality \(m\), then \(\lambda \cong [1,m]\).
\end{Example}

\begin{Example}
Let \(m,n \in \NN\). Then \([1,m], [1,n]\) are totally ordered sets as in Example~\ref{TotEx}. We write \(\llbracket m,n\rrbracket\) as shorthand for the poset \([1,m] \times [1,n]\) under the product partial order. If \(\kappa, \nu\) are totally ordered sets of cardinality \(m, n\) respectively, then \(\kappa \times \nu \cong \llbracket m,n \rrbracket\).

We represent elements of \(\llbracket m,n \rrbracket\) as boxes situated in the first quadrant of the plane, arranged so that \( (a, b)\) is a box in the \(a\)th row from the bottom, and in the \(b\)th column from the left. In this scheme, we have \(u \succeq v\) for \(u,v \in \llbracket m,n \rrbracket\) if and only if the \(v\) box is weakly below and to the left (i.e. `southwest') of the \(u\) box. For example, in the figure below we show the poset \(\llbracket 5,7 \rrbracket\), with the elements \(x=(4,3), y = (2,5), z=(2,2)\). Then we have \(x \succeq z, y \succeq  z\), with \(x,y\) incomparable.
\begin{align*}
\begin{array}{ccccc}
\hackcenter{
\begin{tikzpicture}[scale=0.55]
%%%BRACES AND LABELS
%
%%%DOTTED LINES FROM BRACES
%%%COLORED BACKGROUNDS FOR SECTIONS
\fill[fill=white] (0,0)--(3,0)--(3,-0.2)--(0,-0.2)--(0,0);
\draw[thick, fill=lightgray!50]  (0,0)--(7,0)--(7,5)--(0,5)--(0,0);
%MINES
%
\draw[thick, dotted] (0,0)--(0,5);
\draw[thick, dotted] (1,0)--(1,5);
\draw[thick, dotted] (2,0)--(2,5);
\draw[thick, dotted] (3,0)--(3,5);
\draw[thick, dotted] (4,0)--(4,5);
\draw[thick, dotted] (5,0)--(5,5);
\draw[thick, dotted] (6,0)--(6,5);
\draw[thick, dotted] (7,0)--(7,5);
\draw[thick, dotted] (0,1)--(7,1);
\draw[thick, dotted] (0,2)--(7,2);
\draw[thick, dotted] (0,3)--(7,3);
\draw[thick, dotted] (0,4)--(7,4);
\draw[thick, dotted] (0,5)--(7,5);
\draw[thick]  (0,0)--(7,0)--(7,5)--(0,5)--(0,0);
\node at (2.5,3.5){x};
\node at (4.5,1.5){ y};
\node at (1.5,1.5){z};
\end{tikzpicture}
}
\\
\scriptstyle{\textup{{The poset \(\llbracket 5,7 \rrbracket\), with elements \(x,y,z\)}}} 
\end{array}
\end{align*}
\end{Example}

\subsection{Lower sets and ideals}
Let \(U\) be a subset of a poset \(\lambda\). Then \(U\) is itself a poset under the partial order inherited from \(\lambda\), and we always assume we take this partial order on \(U\). We say \(U\) is a {\em lower set} in \(\lambda\) provided that for all \(u \in U\), \(v \in \lambda\), \(u\succeq v\) implies \(v \in U\). We say \(U\) is a {\em directed set} in \(\lambda\) provided that for all \(u,v \in U\), there exists \(w \in U\) such that \(w \succeq u,v\). We say \(U\) is an {\em ideal} in \(\lambda\) if it is a lower set and a directed set. In particular, we allow ideals to be empty.

Let \(S, U \subseteq \lambda\). We define subsets:
\begin{align*}
S_{ \succeq U} &= \{ v \in S \mid v \succeq u \textup{ for some }u \in U \}\\
S_{ \succ U} &= \{ v \in S \mid v \succ u \textup{ for some }u \in U \} \\
S_{ \preceq U} &= \{ v \in S \mid v \preceq u \textup{ for some }u \in U \}\\
S_{ \not \succeq U} &= \{v \in S \mid v \not \succeq u \textup{ for all }u \in U\} = S \backslash S_{\succeq U}.
\end{align*}
When \(U = \{u\}\), we will write \(S_{\succeq u}\) in place of \(S_{\succeq \{u\}}\), and so on. For any ordered sequence \(\bu = (u_1, \ldots, u_r)\) of elements of \(\lambda\), we will also write  \(S_{\succeq \bu}\) in place of \(S_{\succeq \{u_1, \ldots, u_r\}}\), and so on.  We will often apply these definitions with \(S = \lambda\). 

We will focus primarily on finite posets \(\lambda\). In this setting every ideal is either empty or {\em principal}; i.e. of the form \(\lambda_{\preceq u}\) for some \(u \in \lambda\), and every lower set is equal to \(\lambda_{\preceq U}\) for some \(U\subseteq \lambda\).

\subsection{The \(\m_k\)-function and Problem~\ref{P1}}\label{qkexp}
\begin{Definition}\label{qkdef}
Let \(k \in \NN\), and let \(\lambda\) be a finite poset. We define the value \(\m_k(\lambda) \in \ZZ_{\geq 0}\) recursively by setting:
\begin{align*}
\m_k(\lambda) = 
\begin{cases}
0 &\textup{if }\lambda = \varnothing;\\
|\lambda| &\textup{if }k = 1;\\
\min \{ \max\{ \m_k(\lambda_{\not \succeq u}) , \m_{k-1}(\lambda_{\succ u})\}\mid u \in \lambda\}+1
&  \textup{if }k >1, \lambda \neq \varnothing,
\end{cases}
\end{align*}
where we implicitly take the partial orders on \(\lambda_{\not \succeq u}\), \(\lambda_{\succ u}\) to be those inherited from \(\lambda\).
\end{Definition}

\begin{Example}\label{nonmon}
It is easy to check from Definition~\ref{qkdef} that \(\m_k(\lambda) = |\lambda|\) when \(|\lambda| \leq 2\). Let \(A = \{a,b,c\}\), and consider the posets \(\lambda_0, \lambda_1, \lambda_2, \lambda_3\) with underlying set \(A\), where the strict comparisons in these posets are given as follows:
\begin{align*}
\lambda_0 : \varnothing
\qquad
\lambda_1 : \{b \succ a\}
\qquad
\lambda_2 : \{b,c \succ a\}
\qquad
\lambda_3 : \{c \succ b \succ a\}.
\end{align*}
Note that \(\lambda_0\) is the trivial poset on \(A\) and \(\lambda_3\) is a totally ordered set on \(A\). We have \(\m_1(\lambda_i) = 3\) for \(i=0,1,2,3\) by definition, and it is straightforward to compute:
\begin{align*}
\m_k(\lambda_0) = 3
\qquad
\m_k(\lambda_1) =2
\qquad
\m_k(\lambda_2) = 3
\qquad
\m_k(\lambda_3) = 2
\end{align*}
for all \(k \geq 2\).
\end{Example}

We now explain how this combinatorial function relates to Problem~\ref{P1}. Recall that in Problem~\ref{P1}, \(\mu\) is an unknown ideal in \(\lambda\) we wish to identify, and we may sequentially query elements of \(\lambda\) for membership in \(\mu\), with the restriction that we must stop after the \(k\)th positive query. Note that since \(\mu\) is an ideal in a finite set, we have that \(\mu = \varnothing\) or \(\mu = \lambda_{\preceq x}\) for some \(x \in \lambda\). Let \(\m_k'(\lambda)\) represent the minimum total number of queries needed to guarantee identification of \(\mu\). We explain now that \(\m_k'(\lambda) = \m_k(\lambda)\).

\subsubsection{The \(\lambda = \varnothing\) case} In this case we must have \(\mu = \varnothing\), so no queries are needed to identify \(\mu\). Thus \(\m_k'(\varnothing) =0  = \m_k(\varnothing)\).

\subsubsection{The \(k=1\) case}\label{k1case} With only one positive search query available, the search strategy is very limited. Assume that \(u \in \lambda\) and we know \(v \notin \mu\) for all \(v \succ u\) by previous queries. Then a positive query at \(u\) will identify \(\mu\) to be the ideal \(\lambda_{\preceq u}\). On the other hand, if there exists a element \(v \succ u\) whose membership in \(\mu\) is unknown, a positive query result at \(u\) would result in failure, as \(\mu\) could potentially be \(\lambda_{\preceq v}\) or \(\lambda_{\preceq u}\), and we would be left with no further queries to distinguish these possibilities. 

We see then that the only permissible search strategy is to query all of the elements of \(\lambda\) in some non-increasing sequence, where the first positive query result will identify the generator of the ideal \(\mu\). If \(\mu = \varnothing\), the ideal will only be identified after the final (negative) query, so we have \(\m'_k(\lambda) = |\lambda| = \m_k(\lambda)\). 

\subsubsection{The general \(k>1\), \(|\lambda| > 0\) case} By induction, assume that \(\m'_\ell (\nu) = \m_\ell (\nu)\) for all \(\ell < k\) or \(|\nu| < |\lambda|\). Assume the first query is at some element \(u \in \lambda\). If the query is negative, this implies that \(\mu \subseteq \lambda_{\not \succeq u}\), and we still have \(k\) positive queries to work with. By induction, the minimal total number of queries necessary to guarantee identification of \(\mu\) in \(\lambda_{\not \succeq u}\) is \(\m_k(\lambda_{\not \succeq u})\). 

On the other hand, assume the query at \(u \in \lambda\) is positive. This implies that the ideal generator \(x\) could be any element in \(\lambda_{\succeq u}\), and we now have \(k-1\) positive query results remaining. Let \(\mu'\) be the ideal \(\mu \cap \lambda_{\succ u}\) in \(\lambda_{\succ u}\). Then we have \(\mu' = \varnothing\) if and only if \(x = u\), and \(\mu'\) is nonempty if and only if \(x \in \lambda_{\succ u}\) and \(\mu' = (\lambda_{ \succ u})_{\preceq x}\). Therefore, identifying \(\mu\) is equivalent to identifying the ideal \(\mu'\) in \(\lambda_{ \succ u}\). By induction, \(\m_{k-1}(\lambda_{\succ u})\) is the minimal total number of queries necessary to guarantee success in this search.

Therefore if we begin by querying \(u\), the minimal number of queries that will be necessary to guarantee identification of \(\mu\) in \(\lambda\) is \(\m_k(\lambda_{\not \succeq u}) + 1\) if \(u \notin \mu\), and \(\m_{k-1}(\lambda_{\succ u}) + 1\) if \(u \in \mu\). Thus, by first querying \(u\), the minimal number of queries necessary is
\begin{align*}
\max\{ \m_k(\lambda_{\not \succeq u}), \m_{k-1}(\lambda_{\succ u})\} + 1.
\end{align*}
Therefore, taking the minimum over all possible choices of the initial query \(u\), we have that \(\m_k'(\lambda) = \m_k(\lambda)\), as desired.

\section{Binomial sums and triangular numbers}
Bounds for the \(\m_k\)-function will be shown to be directly related to binomial sums, and, in the \(k=2\) case, triangular numbers. In preparation for establishing this fact, we investigate some properties of binomial sums, and triangular numbers in particular.

\subsection{Binomial sums} Throughout this section, we fix \(k \in \NN\). 
\begin{Definition}
Define the function \(T_k: \ZZ_{\geq 0} \to \ZZ_{\geq 0}\) via:
\begin{align*}
T_k(x) = \sum_{i = 1}^k {x \choose i}.
\end{align*}
\end{Definition}
Notably, when \(k=1\) we have \(T_1(x) = x\), and when \(k=2\) we have
\begin{align}\label{exptri}
T_2(x) = \frac{x(x+1)}{2} = 1 + 2 + \cdots + x,
\end{align}
the \(x\)th {\em triangular number}. The following function is key in describing lower bounds for the \(\m_k\)-function.

\begin{Definition}\label{taukdef}
Define the function \(\tau_k: \ZZ_{\geq 0} \to \ZZ_{\geq 0}\) 
by setting \(\tau_k(x)\) to be the unique non-negative integer such that
\begin{align*}
T_k(\tau_k(x) - 1) < x \leq T_k(\tau_k(x)).
\end{align*}
\end{Definition}

The following two lemmas are clear from definitions.

\begin{Lemma} \label{kappaTri}
For any $x \in \mathbb{Z}_{\geq 0}$,   we have $\tau_k(T_k(x))=x$.
\end{Lemma}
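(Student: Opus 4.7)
The plan is to show this is an immediate consequence of the strict monotonicity of $T_k$, together with the uniqueness clause in Definition~\ref{taukdef}. By definition, $\tau_k(T_k(x))$ is the unique non-negative integer $y$ satisfying
\[
T_k(y-1) < T_k(x) \leq T_k(y).
\]
Clearly $y = x$ makes the right inequality an equality, so the only thing to verify is that $T_k(x-1) < T_k(x)$, which also ensures uniqueness of $y$.

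To establish this strict monotonicity, I would compute the difference directly using Pascal's identity: for $x \geq 1$,
\[
T_k(x) - T_k(x-1) = \sum_{i=1}^k \left[\binom{x}{i} - \binom{x-1}{i}\right] = \sum_{i=1}^k \binom{x-1}{i-1} \geq \binom{x-1}{0} = 1.
\]
This shows $T_k(x-1) < T_k(x)$ for every $x \geq 1$, so the chain $T_k(x-1) < T_k(x) \leq T_k(x)$ holds and $\tau_k(T_k(x)) = x$ for all $x \geq 1$ by the uniqueness in Definition~\ref{taukdef}.

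For the boundary case $x = 0$, I would handle it separately: since $T_k(0) = 0$, we need $\tau_k(0) = 0$, which is consistent with the defining inequalities interpreted in the natural way (the lower inequality $T_k(-1) < 0$ being vacuous, since $\tau_k$ takes values in $\mathbb{Z}_{\geq 0}$). There is no real obstacle here; the whole argument is a three-line verification once monotonicity of $T_k$ is in hand, and the only mildly nontrivial ingredient is the Pascal's identity computation above.
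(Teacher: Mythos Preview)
Your proof is correct. The paper does not give a proof of this lemma at all; it simply states that it is ``clear from definitions.'' Your argument is exactly the natural verification one would supply: strict monotonicity of $T_k$ (via Pascal's identity, which the paper itself uses elsewhere in Lemma~\ref{indTk}) ensures that $y=x$ is the unique non-negative integer satisfying the defining inequalities for $\tau_k(T_k(x))$, with the $x=0$ boundary handled by convention. There is nothing to compare---you have written out what the paper leaves implicit.
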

\begin{Lemma}\label{increasing}
For any $x\leq y$, we have
$\tau_k(x) \leq \tau_k(y).$
\end{Lemma}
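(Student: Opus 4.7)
The plan is to deduce Lemma~\ref{increasing} directly from the monotonicity of the function $T_k$ itself, via the defining inequalities of $\tau_k$. The only real content is the observation that $T_k$ is (weakly) increasing on $\ZZ_{\geq 0}$, which is the sort of statement that essentially proves itself from Pascal's identity, so I do not expect any genuine obstacle in the argument.

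First I would establish monotonicity of $T_k$. For $x \in \ZZ_{\geq 0}$, Pascal's rule gives
\[
T_k(x+1) - T_k(x) = \sum_{i=1}^k \left[\binom{x+1}{i} - \binom{x}{i}\right] = \sum_{i=1}^k \binom{x}{i-1},
\]
which is a sum of non-negative integers. Hence $T_k(x+1) \geq T_k(x)$, and iterating yields $T_k(a) \leq T_k(b)$ whenever $a \leq b$ in $\ZZ_{\geq 0}$.

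Now for the main claim, suppose $x \leq y$. The defining inequality of $\tau_k(y)$ gives $y \leq T_k(\tau_k(y))$, so chaining with the hypothesis,
\[
x \leq y \leq T_k(\tau_k(y)).
\]
The plan is to compare this with the defining inequality $T_k(\tau_k(x) - 1) < x$ for $\tau_k(x)$. Suppose for contradiction that $\tau_k(x) > \tau_k(y)$, i.e.\ $\tau_k(y) \leq \tau_k(x) - 1$. By the monotonicity of $T_k$ just established we would obtain
\[
T_k(\tau_k(y)) \leq T_k(\tau_k(x) - 1) < x,
\]
contradicting the displayed inequality $x \leq T_k(\tau_k(y))$. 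Therefore $\tau_k(x) \leq \tau_k(y)$, proving the lemma.

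The only subtlety worth flagging is that the argument uses $T_k$ being \emph{weakly} increasing — which is all the defining inequalities of $\tau_k$ demand — rather than strictly increasing; no further case analysis or appeal to Lemma~\ref{kappaTri} is needed.
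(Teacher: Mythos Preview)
Your proof is correct. The paper does not give an explicit argument for this lemma, simply declaring it ``clear from definitions''; your write-up is precisely the natural unpacking of that remark, so the approaches coincide.
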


We now prove some additional useful technical lemmas on \(T_k\) and \(\tau_k\). 

\begin{Lemma}\label{indTk}
For all \(x>0\), we have \(T_k(x)  = T_k(x-1) + T_{k-1}(x-1) + 1\).
\end{Lemma}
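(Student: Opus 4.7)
The plan is to prove the identity directly from the definition of $T_k$ by applying Pascal's identity $\binom{x}{i} = \binom{x-1}{i} + \binom{x-1}{i-1}$ termwise. First I would write
\[
T_k(x) - T_k(x-1) = \sum_{i=1}^k \left[\binom{x}{i} - \binom{x-1}{i}\right] = \sum_{i=1}^k \binom{x-1}{i-1},
\]
where the second equality uses Pascal's identity (valid for $x \geq 1$).

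Next I would reindex the resulting sum by setting $j = i-1$ to obtain
\[
\sum_{i=1}^k \binom{x-1}{i-1} = \sum_{j=0}^{k-1} \binom{x-1}{j} = \binom{x-1}{0} + \sum_{j=1}^{k-1} \binom{x-1}{j} = 1 + T_{k-1}(x-1),
\]
using the convention $\binom{x-1}{0} = 1$ and the definition of $T_{k-1}$. Rearranging gives the stated identity.

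I do not anticipate any real obstacle; the only subtle point is ensuring Pascal's identity is available for $i=k$ when $x-1 < k$, but this is fine since $\binom{x-1}{k}$ is defined to be $0$ in that range and Pascal's identity still holds. No separate case analysis based on the size of $x$ relative to $k$ is needed.
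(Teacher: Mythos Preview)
Your proof is correct and takes essentially the same approach as the paper: both arguments reduce the identity to Pascal's rule applied termwise, with the paper starting from $1 + T_k(x-1) + T_{k-1}(x-1)$ and regrouping into $\sum_{i=1}^k\left[\binom{x-1}{i}+\binom{x-1}{i-1}\right]$, while you compute $T_k(x)-T_k(x-1)$ and reindex. The content is identical.
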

\begin{proof}
We have
\begin{align*}
1+ T_k(x-1) + T_{k-1}(x-1)  &= 
{x-1 \choose 0} + \sum_{i=1}^k { x-1 \choose i} + \sum_{i=1}^{k-1} {x-1 \choose i}\\
&=
\sum_{i=1}^k \left[ {x-1 \choose i} + {x -1 \choose i-1}\right]
= \sum_{i=1}^k {x \choose i}
= T_k(x),
\end{align*}
where the third equality follows from the binomial recurrence relation. 
\end{proof}

\begin{Lemma}\label{WrongWayLemmaBig} Let \(x,y \in \mathbb{Z}_{\geq 0}\) be such that \(y < T_{k-1}(\tau_k(x) - 2) +2\). Then \(\tau_k(x)-1 \leq \tau_k(x-y)\).
\end{Lemma}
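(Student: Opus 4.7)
The plan is to set \(\tau := \tau_k(x)\) and translate the desired conclusion into a lower bound on \(x-y\). By Definition~\ref{taukdef}, the inequality \(\tau_k(x-y) \geq \tau - 1\) is equivalent to \(x - y > T_k(\tau - 2)\) whenever \(\tau \geq 2\); the edge cases \(\tau \in \{0,1\}\) make \(\tau - 1 \leq 0 \leq \tau_k(x-y)\) automatic, so I may assume \(\tau \geq 2\) and focus on proving the bound \(x - y > T_k(\tau - 2)\).

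The argument then reduces to a short calculation. From Definition~\ref{taukdef} we have \(x > T_k(\tau - 1)\), and Lemma~\ref{indTk} applied at input \(\tau - 1\) yields the identity \(T_k(\tau - 1) = T_k(\tau - 2) + T_{k-1}(\tau - 2) + 1\). Substituting this into the first inequality gives \(x - T_k(\tau - 2) > T_{k-1}(\tau - 2) + 1\), and since both sides are integers, this upgrades to \(x - T_k(\tau - 2) \geq T_{k-1}(\tau - 2) + 2\). Combining with the hypothesis \(y < T_{k-1}(\tau - 2) + 2\) then yields \(y < x - T_k(\tau - 2)\), i.e.\ \(x - y > T_k(\tau - 2)\); by the equivalence noted above, this completes the proof.

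I do not anticipate a serious obstacle: the argument simply translates between \(\tau_k\) and \(T_k\) through their defining inequalities, and exploits the recursion from Lemma~\ref{indTk} to reveal that the bound \(T_{k-1}(\tau_k(x)-2)+2\) in the hypothesis is exactly the numerical slack forced by \(\tau_k(x) = \tau\). The only subtle point is ensuring \(T_{k-1}(\tau - 2)\) is well defined (in the paper's convention \(T_k\) is defined on \(\mathbb{Z}_{\geq 0}\)), which is precisely why the cases \(\tau \leq 1\) are dispatched separately at the outset.
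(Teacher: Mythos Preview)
Your proof is correct and follows essentially the same approach as the paper: both arguments combine the defining inequality \(T_k(\tau_k(x)-1) < x\) with the recursion of Lemma~\ref{indTk} (applied at \(\tau_k(x)-1\)) to deduce \(x - y > T_k(\tau_k(x)-2)\), and then read off \(\tau_k(x-y) \geq \tau_k(x)-1\) from Definition~\ref{taukdef}. The only cosmetic difference is that you rearrange the algebra slightly and explicitly separate out the degenerate cases \(\tau_k(x) \leq 1\), which the paper leaves implicit.
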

\begin{proof}
We have by Lemma~\ref{indTk} that
\begin{align*}
T_k(\tau_k(x)-2) &= T_k(\tau_k(x)-1) - T_{k-1}(\tau_k(x)-2) - 1
<T_k(\tau_k(x)-1) - y+1.
\end{align*}
By the definition of \(\tau_k(x)\) we have \(T_k(\tau_k(x) - 1) < x\), so
\begin{align*}
T_k(\tau_k(x) - 2) \leq T_k(\tau_k(x) -1) - y < x-y.
\end{align*}
so by the definition of \(\tau_k(x-y)\), we have \(\tau_k(x-y)>\tau_k(x)-2\). Thus \(\tau_k(x-y) \geq \tau_k(x)-1\).
\end{proof}

\begin{Lemma}\label{DiffLem}
If \(x, y \in \mathbb{Z}_{\geq 0}\) and \(n \in \NN\) are such that \(x \equiv 0 \pmod n\) and \(T_k(\tau_k(x)) \equiv y \pmod n\), where \(0 \leq y < n\), then \(T_k(\tau_k(x)) - x \geq y\). 
\end{Lemma}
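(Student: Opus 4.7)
The plan is to observe that the inequality is immediate once one combines the two hypotheses with the defining property of $\tau_k$.

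First I would note that by Definition~\ref{taukdef} we have $x \leq T_k(\tau_k(x))$, so the quantity $T_k(\tau_k(x)) - x$ is a \emph{non-negative} integer. Next I would compute its residue modulo $n$: using $x \equiv 0 \pmod{n}$ and $T_k(\tau_k(x)) \equiv y \pmod{n}$, we get
\[
T_k(\tau_k(x)) - x \equiv y \pmod{n}.
\]
Since $0 \leq y < n$, any non-negative integer in the residue class of $y$ modulo $n$ must be at least $y$ (the non-negative representatives of this class are $y, y+n, y+2n, \dots$). Applying this to $T_k(\tau_k(x)) - x$ yields the claim.

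There is no real obstacle here; the lemma is essentially a bookkeeping statement about residues and is set up precisely so that no nontrivial combinatorics on $T_k$ or $\tau_k$ is required beyond the basic inequality $x \leq T_k(\tau_k(x))$. The only thing worth double-checking is the corner case $x = 0$: then $\tau_k(x) = 0$, $T_k(\tau_k(x)) = 0$, hence $y = 0$, and the inequality $0 \geq 0$ holds trivially, so this case presents no issue.
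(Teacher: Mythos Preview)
Your proof is correct and follows essentially the same approach as the paper's own proof: observe that $T_k(\tau_k(x)) - x$ is non-negative by the definition of $\tau_k$, note that it is congruent to $y$ modulo $n$, and conclude that it must therefore be at least $y$. The paper phrases the last step as writing $T_k(\tau_k(x)) - x = y + nt$ for some $t \in \ZZ_{\geq 0}$, but this is the same reasoning.
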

\begin{proof}
By definition, \(T_k(\tau_k(x)) \geq x\). Then \(T_k(\tau_k(x)) - x \equiv y \pmod n\), and \(T_k(\tau_k(x)) - x  \geq 0\), so \(T_k(\tau_k(x)) - x = y + nt\) for some \(t \in \mathbb{Z}_{\geq 0}\), so the result follows.
\end{proof}

\subsection{Triangular numbers}\label{TriResults}
Now we prove some technical lemmas in the case \(k=2\), recalling that \(T_2(x)\) is the triangular number \(1 + \cdots + x\). The next lemma is just a special case of Lemma~\ref{WrongWayLemmaBig}.

\begin{Lemma}\label{WrongWayLemma}
If \(x,y \in \mathbb{Z}_{\geq 0}\), with \(y< \tau_2(x)\), then \(\tau_2(x)-1 \leq \tau_2(x-y)\).
\end{Lemma}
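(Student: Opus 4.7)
The plan is to deduce this directly from Lemma~\ref{WrongWayLemmaBig} by specializing \(k=2\). The hypothesis in that lemma reads \(y < T_{k-1}(\tau_k(x)-2) + 2\); setting \(k=2\) and noting that \(T_1(n) = n\) for all \(n \in \ZZ_{\geq 0}\) (by definition), this hypothesis collapses to
\begin{align*}
y < T_1(\tau_2(x) - 2) + 2 = (\tau_2(x) - 2) + 2 = \tau_2(x),
\end{align*}
which is precisely the hypothesis stated in our lemma. The conclusion of Lemma~\ref{WrongWayLemmaBig} then yields \(\tau_2(x) - 1 \leq \tau_2(x - y)\) without further work.

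The only subtle point I would want to check is the range of \(x\) in which the reduction makes literal sense, since the expression \(T_1(\tau_2(x) - 2)\) appearing in Lemma~\ref{WrongWayLemmaBig} implicitly assumes \(\tau_2(x) \geq 2\). The edge cases are harmless: if \(x = 0\) then \(\tau_2(x) = 0\) and the hypothesis \(y < \tau_2(x)\) is vacuous, while if \(\tau_2(x) = 1\) then \(y = 0\) is forced and the conclusion \(0 \leq \tau_2(x)\) is trivial. In the remaining case \(\tau_2(x) \geq 2\), the substitution above is valid and Lemma~\ref{WrongWayLemmaBig} applies verbatim.

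I expect no real obstacle here; the lemma is essentially bookkeeping, and the only thing worth writing down in the proof is the identity \(T_1(\tau_2(x) - 2) + 2 = \tau_2(x)\) that identifies the hypothesis of Lemma~\ref{WrongWayLemmaBig} with the one stated here. If a more self-contained proof were desired, one could alternatively argue directly from the definition of \(\tau_2\) using \eqref{exptri}: from \(T_2(\tau_2(x) - 1) < x\) and \(y < \tau_2(x)\) one computes \(T_2(\tau_2(x) - 2) = T_2(\tau_2(x) - 1) - (\tau_2(x) - 1) < x - y\), and then Lemma~\ref{increasing} together with the definition of \(\tau_2(x - y)\) forces \(\tau_2(x - y) \geq \tau_2(x) - 1\). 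But the cleanest presentation is to cite Lemma~\ref{WrongWayLemmaBig}.
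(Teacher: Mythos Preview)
Your proposal is correct and matches the paper's approach exactly: the paper simply states that this lemma is a special case of Lemma~\ref{WrongWayLemmaBig}, and you carry out precisely that specialization (with more care, in fact, since you handle the edge cases \(\tau_2(x)\le 1\) where the expression \(T_1(\tau_2(x)-2)\) is not literally defined).
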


\begin{Lemma}\label{TauLemma}
Let $x, y \in \mathbb{Z}_{\geq 0}, \ell \in \mathbb{N}$, with \(0 \leq y \leq x - \ell \tau_2(x) + T_2(\ell -1)\). Then we have
\begin{align*}
    \tau_2(y) \leq \tau_2(x)-\ell.
\end{align*}
\end{Lemma}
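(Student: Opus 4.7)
The plan is to reduce the inequality $\tau_2(y) \leq \tau_2(x) - \ell$ to a comparison of $y$ with the triangular number $T_2(\tau_2(x)-\ell)$, and then verify that the hypothesis is essentially a sharp reformulation of that comparison.

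First, observe that by Lemma~\ref{kappaTri} we have $\tau_2(T_2(n)) = n$ for every $n \geq 0$, and by Lemma~\ref{increasing} the function $\tau_2$ is monotonic. Therefore, in order to conclude $\tau_2(y) \leq \tau_2(x) - \ell$, it suffices to show that $y \leq T_2(\tau_2(x) - \ell)$. (I will handle the edge case $\tau_2(x) < \ell$ at the end; in the main argument we assume $\tau_2(x) \geq \ell$.)

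Next, I would carry out the direct calculation of $T_2(\tau_2(x)) - T_2(\tau_2(x)-\ell)$ using $T_2(n) = n(n+1)/2$. Setting $n = \tau_2(x)$, one finds
\begin{align*}
T_2(n) - T_2(n-\ell) &= \tfrac{n(n+1)}{2} - \tfrac{(n-\ell)(n-\ell+1)}{2} = \ell n - \tfrac{\ell(\ell-1)}{2} = \ell\, \tau_2(x) - T_2(\ell - 1).
\end{align*}
Rearranging gives the identity $T_2(\tau_2(x)-\ell) = T_2(\tau_2(x)) - \ell\, \tau_2(x) + T_2(\ell-1)$.

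Combining this identity with the defining inequality $x \leq T_2(\tau_2(x))$ yields
\begin{align*}
x - \ell\, \tau_2(x) + T_2(\ell - 1) \;\leq\; T_2(\tau_2(x)) - \ell\, \tau_2(x) + T_2(\ell - 1) \;=\; T_2(\tau_2(x) - \ell).
\end{align*}
By the hypothesis $y \leq x - \ell\, \tau_2(x) + T_2(\ell - 1)$, this gives $y \leq T_2(\tau_2(x) - \ell)$, and applying $\tau_2$ to both sides together with Lemmas~\ref{kappaTri} and~\ref{increasing} yields $\tau_2(y) \leq \tau_2(x) - \ell$, as desired.

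The only genuine subtlety is the range of $\ell$: the computation $T_2(\tau_2(x) - \ell)$ presumes $\tau_2(x) \geq \ell$. The hypothesis $0 \leq x - \ell\, \tau_2(x) + T_2(\ell-1)$ combined with $x \leq T_2(\tau_2(x))$ forces $T_2(\ell - 1) \geq \ell\, \tau_2(x) - T_2(\tau_2(x)) = -T_2(\tau_2(x) - \ell)$, and a short check using the explicit form $T_2(m) = m(m+1)/2$ shows this rules out the problematic case $\ell > \tau_2(x)$ unless $y = 0$ and the inequality holds trivially. I expect this boundary check to be the only potentially fiddly part of writing the proof; the heart of the argument is the one-line algebraic identity for the difference of triangular numbers.
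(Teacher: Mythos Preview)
Your argument is correct and follows essentially the same route as the paper: bound $y$ above by $T_2(\tau_2(x)-\ell)$ using $x \le T_2(\tau_2(x))$ together with the identity $T_2(\tau_2(x)) - \ell\,\tau_2(x) + T_2(\ell-1) = T_2(\tau_2(x)-\ell)$, then apply Lemmas~\ref{increasing} and~\ref{kappaTri}. The only cosmetic difference is that the paper verifies this identity by writing the triangular numbers as sums $1+2+\cdots$ and regrouping, whereas you use the closed form $n(n+1)/2$; your extra remark on the boundary case $\ell > \tau_2(x)$ is something the paper simply leaves implicit.
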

\begin{proof}
By Definition~\ref{taukdef}, we have
\begin{align*}
    y &\leq x - \ell \tau_2(x) + T_2(\ell - 1)
     \leq T_2(\tau_2(x)) - \ell \tau_2(x) + T_2(\ell - 1)\\
    &= [1+ \cdots + \tau_2(x)] - \ell\tau_2(x) + [1+2+\cdots +(\ell -1)] \\
    &= [1 + 2 + \cdots + \tau_2(x)] - [(\tau_2(x) - (\ell-1)) + \cdots + (\tau_2(x) - 1) + \tau_2(x)]\\
    &=1+ 2 + \cdots + (\tau_2(x) - \ell)
    =T_2(\tau_2(x) - \ell).
\end{align*}
Then, applying $\tau_2$ to both sides of the inequality, we have by Lemmas~\ref{kappaTri} and~\ref{increasing} that
\begin{align*}
    \tau_2(y) & \leq \tau_2(T_2(\tau_2(x) - \ell))
      = \tau_2(x) - \ell, 
\end{align*}
as desired. 
\end{proof}

\begin{Lemma}\label{BigTLem}
Let \(y, r,n \in \mathbb{N}\), with \(y \equiv r \pmod n\). Then:
\begin{align*}
T_2(y) \equiv \begin{cases}
T_2(r) + \frac{n}{2} \pmod n& \textup{if } n \equiv 0 \pmod 2, \frac{y-r}{n} \equiv 1 \pmod 2;\\
T_2(r) \pmod n&\textup{otherwise}.
\end{cases}
\end{align*}
\end{Lemma}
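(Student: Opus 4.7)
The plan is to write $y = r + qn$ where $q = (y-r)/n \in \mathbb{Z}_{\geq 0}$, and directly compute the difference $T_2(y) - T_2(r)$ modulo $n$. Using the definition of $T_2$ as a sum, I would telescope:
\begin{align*}
T_2(y) - T_2(r) = \sum_{j=1}^{qn}(r+j) = qnr + T_2(qn).
\end{align*}
Since $qnr \equiv 0 \pmod n$, the problem reduces to determining $T_2(qn) = qn(qn+1)/2$ modulo $n$.

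From here I would split into cases based on the parities of $n$ and $q$, being careful about which factor absorbs the division by $2$. If $qn$ is odd (which forces both $n$ and $q$ odd), then I rewrite $T_2(qn) = qn \cdot (qn+1)/2$, and since $n \mid qn$, we get $T_2(qn) \equiv 0 \pmod n$. If $qn$ is even, I rewrite $T_2(qn) = (qn/2)(qn+1) = (qn)^2/2 + qn/2$; the first summand $(qn)^2/2 = q^2 n \cdot (n/\gcd(n,2))/\cdots$ is a multiple of $n$ whenever $qn$ is even (formally: $2 \mid q^2 n$ in this case, so $n \mid q^2 n^2/2$), leaving $T_2(qn) \equiv qn/2 \pmod n$.

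It then remains to identify $qn/2$ modulo $n$ in each sub-case. If $q$ is even, then $qn/2 = (q/2)n \equiv 0 \pmod n$, recovering $T_2(y) \equiv T_2(r) \pmod n$. If $q$ is odd and $n$ is even, then $qn/2 = ((q-1)/2)n + n/2 \equiv n/2 \pmod n$, recovering $T_2(y) \equiv T_2(r) + n/2 \pmod n$. Combining: the only case producing the $+n/2$ correction is when $n$ is even and $q = (y-r)/n$ is odd, matching the statement.

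The main obstacle, such as it is, is simply keeping the half-integer bookkeeping clean across the four parity sub-cases; there is no deep content beyond the algebraic manipulation, and all cases are verified by the same template of separating $T_2(qn)$ into $(qn)^2/2 + qn/2$ (or $qn \cdot (qn+1)/2$, when parity forces $qn$ odd) and reducing modulo $n$.
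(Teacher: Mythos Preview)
Your proof is correct and takes a genuinely different route from the paper's. The paper argues by induction on $\ell = (y-r)/n$: it peels off the top $n$ summands of $T_2(r+n\ell)$ to obtain the recursion $T_2(r+n\ell) \equiv -\tfrac{(n-1)n}{2} + T_2(r+n(\ell-1)) \pmod n$, and then splits into three cases (on the parities of $n$ and $\ell$) to close the induction. You instead compute the entire difference $T_2(y)-T_2(r)=qnr+T_2(qn)$ in one stroke and reduce $T_2(qn)=qn(qn+1)/2$ modulo $n$ by a direct parity analysis of $q$ and $n$. Your approach is shorter and avoids induction; the paper's approach makes the incremental contribution of each block of $n$ summands more visible but at the cost of three inductive cases. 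One small point: you should state explicitly (as the paper does) that one may assume $y\geq r$ without loss of generality, so that $q\in\mathbb{Z}_{\geq 0}$; the symmetry is harmless since the parity of $(y-r)/n$ equals that of $(r-y)/n$.
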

\begin{proof}
We may assume without loss of generality that \(y \geq r\). Note that since $y \equiv r \pmod{n}$, we have $y-r =n\ell$ for some $\ell \in \Z_{\geq 0}$. We prove the claim by induction on $\ell$. Let $\ell=0$. Then $y=r$ and $\frac{y-r}{n} \equiv 0 \pmod{2}$.
Therefore, \(  T_2({y}) = T_2({r}) \equiv T_2({r}) \pmod{n} \) so the base case holds.

Now assume \(\ell >0\) and the claim holds for all $\ell' < \ell$. Then
\begin{align*}
    T_2({y}) &= T_2({r +n\ell}) \\
    &= (r+n\ell)+(r+n\ell-1) + \cdots + (r + n\ell - (n-1)) + T_2({r+n(\ell-1)})\\
    &= nr - (0 + \cdots + (n-1)) + T_2({r + n(\ell-1)})\\
    &= nr - T_2({n-1}) + T_2({r + n(\ell-1)})\\
    &= nr -\frac{(n-1)n}{2} + T_2({r + n(\ell-1)}) \\
    & \equiv -\frac{(n-1)n}{2} + T_2({r + n(\ell-1)}) \pmod n
\end{align*}
We consider three separate cases, based on the parity of \(n\) and \(\ell\).

{\em Case 1.} Suppose $n$ is odd. Then we have that 
\(
    T_2({r+n(\ell-1)}) \equiv T_2({r}) \pmod{n}
\)
by the induction assumption. Therefore, 
\begin{align*}
    T_2({y}) 
    \equiv
    -\frac{(n-1)n}{2} + T_2({r + n(\ell-1)})
 \equiv -n  \cdot \frac{(n-1)}{2} + T_2({r})
    \equiv T_2({r}) \pmod{n}.
\end{align*}

{\em Case 2.} Suppose $n$ is even and $\ell$ is odd. Then $\ell-1$ is even, so we have $T_2({r+n(\ell-1)}) \equiv T_2(r) \pmod{n}$ by the induction assumption. Then 
\begin{align*}
    T_2(y) 
   & \equiv
    -\frac{(n-1)n}{2} + T_2({r + n(\ell-1)})
    \equiv -\frac{n}{2}(n-1)+ T_2(r) \pmod{n}\\
    &\equiv -\frac{n}{2}(-1) + T_2(r) 
    \equiv  T_2(r) + \frac{n}{2} \pmod{n}.
\end{align*}

{\em Case 3.} Suppose $n $ is even and $\ell$ is even. Then $\ell-1$ is odd, so we have $T_2({r+n(\ell-1)}) \equiv T_2(r) + \frac{n}{2} \pmod{n}$ by the induction assumption. Then
\begin{align*}
    T_2({y})  &\equiv -\frac{(n-1)n}{2} + T_2({r + n(\ell-1)}) \pmod{n}\\
   & \equiv-\frac{n}{2}(n-1) + T_2({r}) + \frac{n}{2} 
     \equiv n + T_2({r})  
    \equiv T_2({r}) \pmod{n}.
\end{align*}
Thus in any case, the claim holds for \(\ell\), completing the induction step and the proof.
\end{proof}

\section{Bounds on the \textup{\(\m_k\)}-function}
Now we establish bounds on the \(\m_k\)-function. 
The following lemma is clear from Definition~\ref{qkdef}.
\begin{Lemma}\label{isomq}
If \(\lambda \cong \nu\), then \(\m_k(\lambda) = \m_k(\nu)\).
\end{Lemma}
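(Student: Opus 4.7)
The plan is to induct on the cardinality $|\lambda| = |\nu|$, with a subsidiary base case handling $k=1$. The argument boils down to observing that every ingredient in Definition~\ref{qkdef}---the subsets $\lambda_{\not\succeq u}$ and $\lambda_{\succ u}$, and the index set $\lambda$ over which the minimum is taken---behaves functorially with respect to poset isomorphism.

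First I would dispose of the base cases. If $\lambda = \varnothing$, then any poset isomorphic to $\lambda$ is also empty (an isomorphism is a bijection of underlying sets), so both $\m_k$-values equal $0$. If $k=1$, then $\m_1(\lambda) = |\lambda| = |\nu| = \m_1(\nu)$ for the same reason.

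For the inductive step with $k > 1$ and $|\lambda| > 0$, I would fix an order-preserving bijection $f : \lambda \bijection \nu$ with order-preserving inverse. For each $u \in \lambda$, I would verify that the restrictions of $f$ yield isomorphisms $\lambda_{\not\succeq u} \cong \nu_{\not\succeq f(u)}$ and $\lambda_{\succ u} \cong \nu_{\succ f(u)}$; this is immediate from the definitions of these subsets, since $f$ and its inverse preserve $\succeq$. Both subposets have cardinality strictly less than $|\lambda|$, since $u$ itself is excluded from each. The inductive hypothesis therefore yields $\m_k(\lambda_{\not\succeq u}) = \m_k(\nu_{\not\succeq f(u)})$ and $\m_{k-1}(\lambda_{\succ u}) = \m_{k-1}(\nu_{\succ f(u)})$. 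Since $u \mapsto f(u)$ is a bijection $\lambda \bijection \nu$, taking the minimum over $u \in \lambda$ in Definition~\ref{qkdef} produces the same value as taking the minimum over $f(u) \in \nu$, and the inductive step is complete.

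There is no serious obstacle here; the only point to verify is the naturality of the constructions $\lambda \mapsto \lambda_{\not\succeq u}$ and $\lambda \mapsto \lambda_{\succ u}$ under isomorphism, which is a one-line check because both subsets are defined purely in terms of the order relation preserved by $f$.
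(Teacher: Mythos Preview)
Your argument is correct; the paper itself offers no proof beyond the remark that the lemma is ``clear from Definition~\ref{qkdef}'', so you have simply supplied the straightforward induction the authors left implicit. One small clarification worth making explicit: your induction on $|\lambda|$ should be understood as proving the statement for \emph{all} $k$ simultaneously, so that the inductive hypothesis legitimately covers the $\m_{k-1}$ term as well as the $\m_k$ term---this is implicit in your write-up but could be stated outright.
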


\begin{Theorem}\label{BOUNDS}
For all \(\lambda, k\), we have
\(
 \tau_k(|\lambda|) \leq \m_k(\lambda) \leq |\lambda|
\).
\end{Theorem}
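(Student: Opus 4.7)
The plan is to prove both bounds by straightforward induction, with the key combinatorial input being the binomial recurrence from Lemma~\ref{indTk}. Since the partial order on \(\lambda\) only enters through the recursive definition of \(\m_k\), both arguments reduce to bookkeeping about sizes of the sets \(\lambda_{\not\succeq u}\) and \(\lambda_{\succ u}\), which always satisfy \(|\lambda_{\not\succeq u}| + |\lambda_{\succ u}| + 1 = |\lambda|\). I expect no serious obstacle; the main content is choosing the right \(u\) for the upper bound and invoking Lemma~\ref{indTk} for the lower bound.

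\textbf{Upper bound.} I would induct on \(|\lambda|\). The bases \(\lambda = \varnothing\) and \(k=1\) are immediate from Definition~\ref{qkdef}. For the inductive step with \(k>1\) and \(\lambda \neq \varnothing\), choose \(u\) to be any maximal element of \(\lambda\); then \(\lambda_{\succ u} = \varnothing\) so \(\m_{k-1}(\lambda_{\succ u}) = 0\), while \(\lambda_{\not\succeq u} = \lambda \setminus \{u\}\) has \(|\lambda|-1\) elements, so by induction \(\m_k(\lambda_{\not\succeq u}) \leq |\lambda| - 1\). Plugging into the recursion gives \(\m_k(\lambda) \leq \max\{|\lambda|-1,0\} + 1 = |\lambda|\).

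\textbf{Lower bound.} I would induct on \(|\lambda| + k\). The bases \(\lambda = \varnothing\) (where \(\tau_k(0) = 0\)) and \(k=1\) (where \(\tau_1(x) = x\) since \(T_1(x) = x\)) are immediate. For the inductive step, fix an arbitrary \(u \in \lambda\) and set \(a = |\lambda_{\not\succeq u}|\), \(b = |\lambda_{\succ u}|\), so that \(a + b + 1 = |\lambda|\). By the inductive hypothesis, \(\m_k(\lambda_{\not\succeq u}) \geq \tau_k(a)\) and \(\m_{k-1}(\lambda_{\succ u}) \geq \tau_{k-1}(b)\). Set
\[
M := \max\{\tau_k(a),\, \tau_{k-1}(b)\}.
\]
By Definition~\ref{taukdef}, \(a \leq T_k(M)\) and \(b \leq T_{k-1}(M)\). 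Applying Lemma~\ref{indTk},
\[
a + b + 1 \leq T_k(M) + T_{k-1}(M) + 1 = T_k(M+1),
\]
so \(\tau_k(|\lambda|) \leq M+1\) by Lemma~\ref{increasing} and Lemma~\ref{kappaTri}. Hence
\[
\max\{\m_k(\lambda_{\not\succeq u}),\, \m_{k-1}(\lambda_{\succ u})\} + 1 \;\geq\; M + 1 \;\geq\; \tau_k(|\lambda|).
\]
Taking the minimum over \(u \in \lambda\) in the recursion from Definition~\ref{qkdef} yields \(\m_k(\lambda) \geq \tau_k(|\lambda|)\), completing the induction.

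The only nontrivial ingredient is recognizing that \(T_k(M) + T_{k-1}(M) + 1 = T_k(M+1)\), which is exactly the content of Lemma~\ref{indTk}; once that is in hand, both bounds follow from the recursive definition of \(\m_k\) without reference to any finer poset-theoretic structure.
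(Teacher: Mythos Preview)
Your proof is correct. The upper bound argument is identical to the paper's. For the lower bound, your approach is genuinely different and in fact cleaner: the paper argues by contradiction, first bounding \(|\lambda_{\succ u}|\) above and then invoking Lemma~\ref{WrongWayLemmaBig} to control \(\tau_k(|\lambda_{\not\succeq u}|)\) from below. Your argument bypasses both the contradiction setup and Lemma~\ref{WrongWayLemmaBig} entirely, going directly through the identity \(T_k(M)+T_{k-1}(M)+1=T_k(M+1)\) of Lemma~\ref{indTk} applied to \(M=\max\{\tau_k(a),\tau_{k-1}(b)\}\). The paper's route makes the intuition behind Lemma~\ref{WrongWayLemmaBig} visible in context, but your route is shorter and uses fewer auxiliary lemmas; either is a valid proof of the theorem.
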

\begin{proof}
We first prove that \(\m_k(\lambda) \leq |\lambda|\). The claim holds for \(k=1\) and \(\lambda = \varnothing\) by Definition~\ref{qkdef}. Now let \(k>1\), \(|\lambda|> 0\), and assume \(\m_{k'}(\lambda') \leq |\lambda'|\)  for all \(k' < k\), \(|\lambda'| < |\lambda|\). Let \(v\) be any maximal element in \(\lambda\). Then we have \(\lambda_{\succ v} = \varnothing\) and \(|\lambda_{\not \succeq v}| = |\lambda| - 1\), so:
\begin{align*}
\m_k(\lambda) &= \min \{ \max\{ \m_k(\lambda_{\not \succeq u}) , \m_{k-1}(\lambda_{\succ u})\}\mid u \in \lambda\}+1\\
&\leq  \max\{ \m_k(\lambda_{\not \succeq v}) , \m_{k-1}(\lambda_{\succ v})\} + 1
\leq \max\{ |\lambda_{\not \succeq v}|, 0\} + 1
 = (|\lambda| - 1) + 1
= |\lambda|,
\end{align*}
as desired.

Now we prove that \(\tau_k(|\lambda|) \leq \m_k(\lambda)\).
The claim holds for \(k =1\), as 
\(
\m_1(\lambda) = |\lambda| = {|\lambda| \choose 1} = T_1(|\lambda|),
\)
and the claim holds for \(\lambda = \varnothing\), as we have 
\(
\m_k(\varnothing) = 0 = \sum_{i=1}^k {0 \choose i} = T_k(0).
\)
Now let \(k>1\), \(|\lambda|> 0\), and assume \(\tau_{k'}(|\lambda'|) \leq \m_{k'}(\lambda')\)  for all \(k' < k\), \(|\lambda'| < |\lambda|\).
For some \(u \in \lambda\), we have
\begin{align*}
\m_k(\lambda)  = \max\{ \m_k(\lambda_{\not \succeq u}), \m_{k-1}(\lambda_{\succ u})\} + 1.
\end{align*}
Then by the induction assumption we have 
\begin{align}\label{firstineq}
\m_k(|\lambda|) \geq \m_k(|\lambda_{\not \succeq u}|) + 1 \geq \tau_k(|\lambda_{\not \succeq u}|) + 1
\end{align}
and
\begin{align}\label{secondineq}
\m_k(|\lambda|) \geq \m_{k-1}(|\lambda_{\succ u}|) + 1 \geq \tau_{k-1}(|\lambda_{\succ u}|) + 1.
\end{align}
Assume by way of contradiction that \(\m_k(|\lambda|) < \tau_k(|\lambda|)\). First we claim that \(|\lambda_{\succ u}| < T_{k-1}(\tau_k(|\lambda|) -2) + 1\). Indeed, if \(|\lambda_{\succ u}| \geq T_{k-1}(\tau_k(|\lambda|) -2) + 1\), then by Definition~\ref{taukdef} we would have \(\tau_{k-1}(|\lambda_{\succ u}|) > \tau_k(|\lambda|) - 2\), so \(\tau_{k-1}(|\lambda_{\succ u}|) \geq \tau_k(|\lambda|) -1\). But then
\begin{align*}
\tau_{k-1}(|\lambda_{\succ u}|) + 1 \geq \tau_{k}(|\lambda|) > \m_k(|\lambda|),
\end{align*}
a contradiction of (\ref{secondineq}). Thus \(|\lambda_{\succ u}| < T_{k-1}(\tau_k(|\lambda|) -2) + 1\) as desired.
Note then that \(|\lambda_{\succ u}| + 1 < T_{k-1}(\tau_k(|\lambda|) -2) + 2\), so by Lemma~\ref{WrongWayLemmaBig}, we have
\begin{align*}
\tau_k(|\lambda|) - 1 \leq \tau_k( |\lambda| - |\lambda_{\succ u}| - 1).
\end{align*}
Therefore, applying (\ref{firstineq}) we have
\begin{align*}
\m_k(|\lambda|) \geq \tau_k(|\lambda_{\not \succeq u}|) + 1 = \tau_k(|\lambda| - |\lambda_{\succ u}| - 1) + 1 \geq (\tau_k(|\lambda|) -1 ) + 1 = \tau_k(|\lambda|) > \m_k(|\lambda|),
\end{align*}
a contradiction. Therefore \(\tau_k(|\lambda|) \leq \m_k(|\lambda|)\), as desired. This completes the induction step, and the proof.
\end{proof}

With the following two lemmas, we prove that the bounds of Theorem~\ref{BOUNDS} are tight with respect to arbitrary posets.

\begin{Lemma}\label{TRIVIAL}
Let \(\lambda\) be a poset with trivial partial order. Then \(\m_k(\lambda) = |\lambda|\).
\end{Lemma}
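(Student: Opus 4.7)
The plan is to proceed by induction on $|\lambda|$, exploiting the fact that in a trivially ordered poset, every element is both maximal and minimal, so the recursive step in Definition~\ref{qkdef} collapses to something trivial.

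First I would handle the base cases: if $\lambda = \varnothing$ then $\m_k(\lambda) = 0 = |\lambda|$ by definition, and if $k = 1$ then $\m_k(\lambda) = |\lambda|$ also by definition, so we may assume $k > 1$ and $|\lambda| > 0$.

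The key observation is that under the trivial partial order, for any $u \in \lambda$ we have $\lambda_{\succ u} = \varnothing$ (no element is strictly above $u$), and $\lambda_{\not\succeq u} = \lambda \setminus \{u\}$, which is again trivially ordered and has cardinality $|\lambda|-1$. By the inductive hypothesis applied to $\lambda \setminus \{u\}$, together with $\m_{k-1}(\varnothing) = 0$, the recursion gives
\begin{align*}
\m_k(\lambda) = \min_{u \in \lambda} \max\{\m_k(\lambda \setminus \{u\}),\, 0\} + 1 = (|\lambda| - 1) + 1 = |\lambda|,
\end{align*}
completing the induction. No obstacle is really anticipated here --- the whole point is that the trivial partial order offers zero informational advantage, so every query can at best eliminate one candidate, and the recursion reflects this directly. (Alternatively, one could quote Theorem~\ref{BOUNDS} for the upper bound $\m_k(\lambda) \leq |\lambda|$ and use the same recursion argument only to produce the matching lower bound.)
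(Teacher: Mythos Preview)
Your proposal is correct and follows essentially the same approach as the paper: induction on $|\lambda|$, using the observations $\lambda_{\succ u} = \varnothing$ and $\lambda_{\not\succeq u} = \lambda \setminus \{u\}$ to reduce the recursion to $(|\lambda|-1)+1$. The paper's proof is essentially line-for-line the same argument.
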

\begin{proof}
If \(\lambda = \varnothing\) or \(k = 1\), the claim follows by Definition~\ref{qkdef}. Now let \(k>1\), \(|\lambda|> 0\), and assume \(\m_{k'}(\lambda') = |\lambda'|\)  for all \(k' < k\), and trivial posets \(\lambda'\) with \(|\lambda'| < |\lambda|\). Let \(u \in \lambda\). Then we have that \(\lambda_{\succ u} = \varnothing\), and \(\lambda_{\not \succeq u} = \lambda \backslash \{u\}\) is itself a trivial poset. Therefore by the induction assumption we have
\begin{align*}
\m_k(\lambda)& = \min \{ \max\{ \m_k(\lambda_{\not \succeq u}) , \m_{k-1}(\lambda_{\succ u})\}\mid u \in \lambda\}+1\\
&= \min \{ \max\{ |\lambda| -1, 0\} \mid u \in \lambda\} + 1 
= (|\lambda| -1) + 1 = |\lambda|,
\end{align*}
as desired.
\end{proof}

\begin{Lemma}\label{TOTAL}
Let \(\lambda\) be a totally ordered set. Then \(\m_k(\lambda) = \tau_k(|\lambda|)\).
\end{Lemma}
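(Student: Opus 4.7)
The plan is to combine the lower bound $\tau_k(|\lambda|) \leq \m_k(\lambda)$ already supplied by Theorem~\ref{BOUNDS} with a matching upper bound $\m_k(\lambda) \leq \tau_k(|\lambda|)$ obtained by exhibiting an explicit effective choice of $u$ in the recursion of Definition~\ref{qkdef}. The upper bound proof will go by double induction, on both $k$ and $m := |\lambda|$, with Lemma~\ref{isomq} used to identify $\lambda$ with $[1,m]$ so that the $\succeq$-relation is just the usual $\geq$.

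The base cases are immediate: for $k=1$ we have $\m_1(\lambda) = m = T_1(m)$, forcing $\tau_1(m) = m$; and for $\lambda = \varnothing$ we have $\m_k(\varnothing) = 0 = \tau_k(0)$.

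For the inductive step with $k > 1$ and $m > 0$, the candidate element I would use in the recursion is $u = j$ where $j := T_k(\tau_k(m) - 1) + 1$. This is a legal choice in $[1,m]$ because $T_k(\tau_k(m) - 1) < m$ by Definition~\ref{taukdef}. With this choice, $\lambda_{\not\succeq u}$ is a totally ordered set of cardinality $j - 1 = T_k(\tau_k(m) - 1)$, so the inductive hypothesis combined with Lemma~\ref{kappaTri} yields
\[
\m_k(\lambda_{\not\succeq u}) \;=\; \tau_k\bigl(T_k(\tau_k(m) - 1)\bigr) \;=\; \tau_k(m) - 1.
\]
Meanwhile $\lambda_{\succ u}$ is totally ordered of cardinality $m - j = m - T_k(\tau_k(m)-1) - 1$, and the key estimate
\[
m - j \;\leq\; T_{k-1}(\tau_k(m) - 1)
\]
follows directly from $m \leq T_k(\tau_k(m)) = T_k(\tau_k(m) - 1) + T_{k-1}(\tau_k(m) - 1) + 1$ via Lemma~\ref{indTk}. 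By the inductive hypothesis together with Lemmas~\ref{increasing} and~\ref{kappaTri}, this gives $\m_{k-1}(\lambda_{\succ u}) \leq \tau_{k-1}(T_{k-1}(\tau_k(m)-1)) = \tau_k(m) - 1$. Plugging both estimates into the recursive formula of Definition~\ref{qkdef} yields $\m_k(\lambda) \leq \max\{\tau_k(m)-1,\tau_k(m)-1\} + 1 = \tau_k(m)$, closing the induction.

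The only slightly subtle point is the choice of $j$, but it is forced: requiring $\tau_k(j-1) \leq \tau_k(m) - 1$ caps $j$ above by $T_k(\tau_k(m) - 1) + 1$, while requiring $\tau_{k-1}(m-j) \leq \tau_k(m) - 1$ bounds $j$ below by $m - T_{k-1}(\tau_k(m) - 1)$, and Lemma~\ref{indTk} together with $m \leq T_k(\tau_k(m))$ guarantees that the interval of valid $j$ is nonempty. The only place one must pay attention is the interplay of the two inductions, ensuring that the hypothesis applies both to $\lambda_{\not\succeq u}$ (smaller cardinality, same $k$) and to $\lambda_{\succ u}$ (possibly smaller cardinality, smaller $k$); both remain totally ordered subsets, so Lemma~\ref{isomq} allows the induction to proceed.
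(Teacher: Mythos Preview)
Your proof is correct and essentially identical to the paper's: both argue by induction, identify $\lambda$ with $[1,m]$, choose the pivot $u = T_k(\tau_k(m)-1)+1$, and bound the two branches using Lemmas~\ref{kappaTri}, \ref{increasing}, and~\ref{indTk} in exactly the same way before appealing to Theorem~\ref{BOUNDS} for the matching lower bound. Your final paragraph explaining why this choice of $u$ is forced is a nice addition not present in the paper, but the argument itself is the same.
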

\begin{proof}
As usual, we note that the claim holds for \(k =1, n=0\) by Definition~\ref{qkdef}. 
We now let \(k>1\) and \(|\lambda|>0\), and make the induction assumption that \(\m_{k'}(\lambda') = \tau_{k'}(|\lambda'|)\) for all \(k' < k\) and totally ordered \(\lambda'\) with \(|\lambda'| < |\lambda|\). 

We may assume \(\lambda = [1,n]\), as any totally ordered set of cardinality \(n\) is equivalent to this interval. Note that we have \(0 \leq T_k(\tau_k(n) - 1) < n\) by Definition~\ref{taukdef}, so \(v := T_k(\tau_k(n) - 1) +1 \in [1,n]\). Then, applying Lemma~\ref{kappaTri}, we have
\begin{align*}
\m_k(\lambda_{\not \succeq v}) = \tau_k(|[1,v-1]|) = \tau_k(v-1) = \tau_k(T_k(\tau_k(n)-1)) = \tau_k(n) -1.
\end{align*}
On the other hand, we have
\begin{align*}
\m_{k-1}(\lambda_{\succ v}) = \tau_{k-1}(|[v+1,n]|) = \tau_{k-1}(n-v) = \tau_{k-1}(n - T_k(\tau_k(n)-1) - 1)).
\end{align*}
Then we have
\begin{align*}
\m_{k-1}(\lambda_{\succ v}) & =  \tau_{k-1}(n - T_k(\tau_k(n)-1) - 1)) 
\leq \tau_{k-1} (T_k(\tau_k(n)) - T_k(\tau_k(n)-1) -1))\\
&= \tau_{k-1} ((T_{k-1}(\tau_k(n)-1) + 1) -1)
=\tau_{k-1}(T_{k-1}(\tau_k(n) - 1))
= \tau_k(n)-1,
\end{align*}
using Lemma~\ref{increasing} and the fact that \(n \leq T_k(\tau_k(n))\) by Definition~\ref{taukdef} for the first inequality, Lemma~\ref{indTk} for the second equality, and Lemma~\ref{kappaTri} for the last equality.

Thus we have
\begin{align*}
\m_k(\lambda) &= \min \{ \max\{ \m_k(\lambda_{\not \succeq u}) , \m_{k-1}(\lambda_{\succ u})\} +1 \mid u \in \lambda\}\\
&\leq  \max\{ \m_k(\lambda_{\not \succeq v}) , \m_{k-1}(\lambda_{\succ v})\} +1
=(\tau_k(n) - 1) + 1
= \tau_k(n) = \tau_k(|\lambda|).
\end{align*}
Since \(\m_k(\lambda) \geq \tau_k(|\lambda|)\) by Theorem~\ref{BOUNDS}, we have \(\m_k(\lambda) = \tau_k(|\lambda|)\). This completes the induction step, and the proof.
\end{proof}

\begin{Remark}
The proof of Lemma~\ref{TOTAL} contains a solution to the strategy question from Problem~\ref{P1} for totally ordered sets, defined recursively for any \(k\in \NN\). Namely, one should query the element \(v\) such that \(|\lambda_{\prec v}| = T_k(\tau_k(|\lambda|) -1) \). If the query is negative, repeat the process with the totally ordered set \(\lambda_{\prec v}\). If the query is positive and \(k=1\), stop. Otherwise, repeat the process with the totally ordered set \(\lambda_{\succ v}\) and \(k: = k-1\). The final positive query will identify the element which generates the ideal \(\mu\).
\end{Remark}

\begin{Remark}
In view of Theorem~\ref{BOUNDS} and Lemmas~\ref{TRIVIAL} and~\ref{TOTAL}, one may be led to conjecture that \(\m_k(\lambda') \leq \m_k(\lambda)\) when \(\lambda'\) is a refinement of the poset \(\lambda\). This does not hold in general, however. For a counterexample, see Example~\ref{nonmon}, where the posets \(\lambda_0, \lambda_1, \lambda_2, \lambda_3\) are sequential refinements, but the corresponding sequence of \(\m_k\) values is not monotonic when \(k \geq 2\).
\end{Remark}

\section{Strategy in the \(k=2\) case}

We will now narrow our focus to the \(k=2\) setting. We develop a combinatorial language for describing query strategies in response to Problem~\ref{P1}. We fix some nonempty poset \(\lambda\) throughout this section.

\begin{Definition}
Let \(r \in \NN\), and \(\bu = (u_1, \ldots, u_r)\) be a sequence of elements of \(\lambda\). For each \(t = 1, \ldots, r\), define the subset:
\begin{align*}
\lambda_{\bu}^{(t)} := \lambda_{\succeq u_t} \backslash \lambda_{ \succeq \{u_1, \ldots, u_{t-1}\}} = \{v \in \lambda \mid v \succeq u_t, v \not \succeq u_i \textup{ for all } i = 1, \ldots, t-1\}.
\end{align*}
If \(\lambda_{\succeq \bu} = \lambda\) and \(\lambda_{\bu}^{(t)} \neq \varnothing\) for all \(t = 1, \ldots, r\), we call \(\bu\) a {\em \(\lambda\)-strategy}. 
\end{Definition}

By definition, the sets \(\lambda_{\bu}^{(1)}, \ldots, \lambda_{\bu}^{(r)}\) are mutually disjoint, so if \(\bu\) is a \(\lambda\)-strategy, we have:
\begin{align}\label{sqcupeq}
\lambda = \lambda_{\bu}^{(1)} \sqcup \cdots \sqcup \lambda_{\bu}^{(r)}.
\end{align}

\subsection{The \(\M_2\)-function}

\begin{Definition}\label{Q2def}
For a sequence of elements \(\bu = (u_1, \ldots, u_r)\) in \(\lambda\), we define:
\begin{align*}
\M_2(\lambda, \bu) := \max \{| \lambda_{\bu}^{(t)}| + t -1 \mid t= 1, \ldots, r \}.
\end{align*}
\end{Definition}
We will primarily be concerned with the value of \(\M_2(\lambda, \bu)\) when \(\bu\) is a \(\lambda\)-strategy. 

\begin{Example}\label{nonopex}
Let \(\lambda = \llbracket 5,7 \rrbracket\), and define the \(\lambda\)-strategy
\begin{align*}
\bu = ((2,6), (5,2), (1,5), (3,3), (2,1), (1,4), (1,1)).
\end{align*}
Then we may visually represent \(\bu\) in the diagram below:
\begin{align*}
\begin{array}{ccccc}
\hackcenter{
\begin{tikzpicture}[scale=0.55]
%%%BRACES AND LABELS
%
%%%DOTTED LINES FROM BRACES
%%%COLORED BACKGROUNDS FOR SECTIONS
\fill[fill=white] (0,0)--(3,0)--(3,-0.2)--(0,-0.2)--(0,0);
\draw[thick, fill=lightgray!50]  (0,0)--(7,0)--(7,5)--(0,5)--(0,0);
\draw[thick, fill=blue!40!green!45]  (0,0)--(7,0)--(7,5)--(0,5)--(0,0);
\draw[  thick, fill=orange!60]  (3,0)--(7,0)--(7,5)--(3,5)--(3,0);
\draw[thick, thick,  fill=violet!35] (0,1)--(7,1)--(7,5)--(0,5)--(0,1);
\draw[thick,  fill=cyan!25] (2,2)--(7,2)--(7,5)--(2,5)--(2,2);
\draw[thick, fill=lime] (4,0)--(7,0)--(7,5)--(4,5)--(4,0);
\draw[thick, fill=yellow] (1,4)--(7,4)--(7,5)--(1,5)--(1,4);
\draw[thick, fill=pink] (5,1)--(7,1)--(7,5)--(5,5)--(5,1);
%MINES
%
\draw[thick, dotted] (0,0)--(0,5);
\draw[thick, dotted] (1,0)--(1,5);
\draw[thick, dotted] (2,0)--(2,5);
\draw[thick, dotted] (3,0)--(3,5);
\draw[thick, dotted] (4,0)--(4,5);
\draw[thick, dotted] (5,0)--(5,5);
\draw[thick, dotted] (6,0)--(6,5);
\draw[thick, dotted] (7,0)--(7,5);
\draw[thick, dotted] (0,1)--(7,1);
\draw[thick, dotted] (0,2)--(7,2);
\draw[thick, dotted] (0,3)--(7,3);
\draw[thick, dotted] (0,4)--(7,4);
\draw[thick, dotted] (0,5)--(7,5);
\draw[thick]  (0,0)--(7,0)--(7,5)--(0,5)--(0,0);
\node at (5.5,1.5) { \textcircled{$\scriptstyle{1}$}};
\node at (1.5,4.5) { \textcircled{$\scriptstyle{2}$}};
\node at (4.5,0.5) { \textcircled{$\scriptstyle{3}$}};
\node at (2.5,2.5) { \textcircled{$\scriptstyle{4}$}};
\node at (0.5,1.5) { \textcircled{$\scriptstyle{5}$}};
\node at (3.5,0.5) { \textcircled{$\scriptstyle{6}$}};
\node at (0.5,0.5) { \textcircled{$\scriptstyle{7}$}};
\end{tikzpicture}
}
\\
\scriptstyle{\textup{{The poset \(\lambda =\llbracket 5,7 \rrbracket\) with \(\lambda\)-strategy \(\bu\) }}} 
\end{array}
\end{align*}
The elements \(u_1, \ldots, u_7\) are marked with circled numbers. For each \(i \in \{1, \ldots, 7\}\), \(\lambda_{\bu}^{(i)}\) is the set of boxes in the same colored region as the box marked \({ \textcircled{$\scriptstyle{i}$}}\). The cardinalities of these sets are \(8,4,6,4,9,1,3\) respectively, so we have
\begin{align*}
\M_2(\lambda, \bu) = \max \{ 8+0, 4+1, 6+2, 4+3, 9+4, 1+5, 3+6 \} = 13.
\end{align*}
\end{Example}

We consider now some special choices of \(\lambda\)-strategies.

\begin{Lemma}\label{EveryShot}
For any nonempty poset \(\lambda\), let \(\bu = (u_1, \ldots, u_{|\lambda|})\) be any arrangement of the elements of \(\lambda\) which is non-increasing with respect to the partial order. Then \(\bu\) is a \(\lambda\)-strategy and \textup{\(\M_2(\lambda, \bu) = |\lambda|\)}.
\end{Lemma}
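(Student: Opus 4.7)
The plan is to show that each set $\lambda_{\bu}^{(t)}$ is exactly the singleton $\{u_t\}$; the claim about $\M_2(\lambda, \bu)$ and the $\lambda$-strategy conditions then follow immediately from Definition~\ref{Q2def}.

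First, I would unpack the non-increasing condition. Since $\bu$ lists each element of $\lambda$ exactly once, the distinctness of the $u_i$ together with non-increasing means that for all $i < t$ we have $u_i \not\prec u_t$, which is equivalent to $u_t \not\succeq u_i$. Combined with the tautology $u_t \succeq u_t$, this shows $u_t \in \lambda_{\succeq u_t} \setminus \lambda_{\succeq \{u_1, \ldots, u_{t-1}\}} = \lambda_{\bu}^{(t)}$, so each $\lambda_{\bu}^{(t)}$ is nonempty. Moreover, since every $v \in \lambda$ equals $u_j$ for some $j$ and therefore lies in $\lambda_{\succeq u_j} \subseteq \lambda_{\succeq \bu}$, we have $\lambda_{\succeq \bu} = \lambda$. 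These two observations together verify that $\bu$ is a $\lambda$-strategy.

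Next, I would prove the reverse containment $\lambda_{\bu}^{(t)} \subseteq \{u_t\}$. Suppose $v \in \lambda_{\bu}^{(t)}$, and write $v = u_j$. The condition $v \not\succeq u_i$ for all $i < t$ rules out $j < t$ (since $u_j \succeq u_j$ would then violate it with $i = j$), so $j \geq t$; on the other hand, $v \succeq u_t$ becomes $u_j \succeq u_t$, which by non-increasing forces $j \leq t$. Hence $j = t$, so $\lambda_{\bu}^{(t)} = \{u_t\}$. Therefore $|\lambda_{\bu}^{(t)}| = 1$ for every $t$, and
\[
\M_2(\lambda, \bu) = \max\{1 + t - 1 \mid t = 1, \ldots, |\lambda|\} = |\lambda|.
\]
The argument is essentially bookkeeping with the definitions; the only subtlety is translating "non-increasing" into the correct partial-order statement, so I anticipate no real obstacle.
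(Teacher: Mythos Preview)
Your proof is correct and follows the same approach as the paper's, which simply asserts that $|\lambda_{\bu}^{(t)}| = 1$ for all $t$ and then computes $\M_2(\lambda,\bu)$ exactly as you do. You have merely supplied the details behind that assertion by explicitly verifying $\lambda_{\bu}^{(t)} = \{u_t\}$.
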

\begin{proof}
By the condition on \(\bu\) we have \(|\lambda_{\bu}^{(t)}| = 1\) for all \(t\), so \(\bu\) is a \(\lambda\)-strategy and
\begin{align*}
\M_2(\lambda, \bu) &= \max \{  | \lambda_{\bu}^{(t)}| + t-1\mid t= 1, \ldots, |\lambda|\}
= \max \{ 1 + t -1 \mid t= 1, \ldots, |\lambda|\}= |\lambda|,
\end{align*}
as desired.
\end{proof}

\begin{Lemma}\label{SingleShot}
Let \(\lambda\) be a nonempty poset, and assume there exists a \(\lambda\)-strategy \(\bu = (u_1)\) of length one. Then we have \(\textup{\(\M_2(\lambda, \bu) = |\lambda|\)}\).
\end{Lemma}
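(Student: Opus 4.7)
The proof will be a straightforward unpacking of the definitions, since with only one query in the strategy, the set $\lambda_{\bu}^{(1)}$ is forced to equal all of $\lambda$. There is no genuine obstacle here.

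First I would observe that because $\bu = (u_1)$ has length $r = 1$, Definition~\ref{Q2def} collapses to
\[
\M_2(\lambda, \bu) = |\lambda_{\bu}^{(1)}| + 1 - 1 = |\lambda_{\bu}^{(1)}|,
\]
so the claim reduces to showing $|\lambda_{\bu}^{(1)}| = |\lambda|$.

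Next, I would unpack $\lambda_{\bu}^{(1)}$ from its defining formula. The index set $\{u_1, \ldots, u_{t-1}\}$ is empty when $t = 1$, so $\lambda_{\succeq \{u_1, \ldots, u_0\}} = \varnothing$ and therefore
\[
\lambda_{\bu}^{(1)} = \lambda_{\succeq u_1} \setminus \varnothing = \lambda_{\succeq u_1}.
\]

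Finally I would invoke the hypothesis that $\bu$ is a $\lambda$-strategy, which by definition requires $\lambda_{\succeq \bu} = \lambda$; for a length-one sequence this simply reads $\lambda_{\succeq u_1} = \lambda$. Combining with the previous step gives $|\lambda_{\bu}^{(1)}| = |\lambda|$, and hence $\M_2(\lambda, \bu) = |\lambda|$, as desired.
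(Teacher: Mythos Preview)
Your proof is correct and follows essentially the same approach as the paper's own proof: both use the defining condition $\lambda_{\succeq \bu} = \lambda$ of a $\lambda$-strategy to identify $\lambda_{\bu}^{(1)}$ with $\lambda$, and then read off $\M_2(\lambda,\bu) = |\lambda|$ from Definition~\ref{Q2def}. Your version is slightly more explicit in unpacking why $\lambda_{\bu}^{(1)} = \lambda_{\succeq u_1}$, but the argument is the same.
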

\begin{proof}
By the definition of \(\lambda\)-strategies \(\bu\), we must have \(\lambda = \lambda_{\succeq \bu} = \lambda_{\succeq u_1} =  \lambda_{\bu}^{(1)} \). Thus we have
\(
\M_2(\lambda, \bu) = |  \lambda_{\bu}^{(1)}| =  |\lambda|,
\)
as desired.
\end{proof}

For sequences of elements \(\bv = (v_1, \ldots, v_s)\) and \(\bw = (w_1, \ldots, w_r)\) in \(\lambda\), we will write \(\bv\bw\) for the concatenation \((v_1, \ldots, v_s, w_1, \ldots, w_r)\), or just \(v_1\bw\) if \(\bv = (v_1)\). 
For \(u \in \lambda\) with \(\lambda_{\succeq u} \neq \lambda\), note that \(u \bw\) is a \(\lambda\)-strategy if and only if \(\bw\) is a \(\lambda_{\not \succeq u}\)-strategy.

\begin{Lemma}\label{truncatelem}
Let \(\lambda\) be a nonempty poset. Let \(\bv = (v_1, \ldots, v_s)\) be a sequence of elements of \(\lambda\), and \(\bw = (w_1, \ldots, w_r)\) be a sequence of elements of \(\lambda_{\not \succeq \bv}\). Then, setting \(\bu = \bv\bw\), we have
\begin{align*}
\textup{\(\M_2(\lambda, \bu) = \max\{ \M_2(\lambda, \bv), \M_2(\lambda_{\not \succeq \bv}, \bw) + s\}.\)}
\end{align*}
\end{Lemma}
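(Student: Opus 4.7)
\smallskip

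\noindent\textbf{Proof plan.} The strategy is to split the maximum defining $\M_2(\lambda, \bu)$ into two ranges: indices $t$ with $1 \leq t \leq s$ (corresponding to the $\bv$-part of $\bu$) and indices $t$ with $s < t \leq s+r$ (corresponding to the $\bw$-part), and show that these two sub-maxima match the two terms on the right-hand side.

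First I would identify $\lambda_\bu^{(t)}$ with $\lambda_\bv^{(t)}$ for $1 \leq t \leq s$. Since $u_t = v_t$ and $\{u_1,\ldots,u_{t-1}\} = \{v_1,\ldots,v_{t-1}\}$ whenever $t \leq s$, this is immediate from the definition of $\lambda_\bu^{(t)}$. Consequently,
\[
\max\{|\lambda_\bu^{(t)}| + t-1 \mid 1 \leq t \leq s\} = \M_2(\lambda, \bv).
\]

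The key step is to handle the $\bw$-part: for $1 \leq t' \leq r$, I want to show
\[
\lambda_\bu^{(s+t')} = (\lambda_{\not\succeq \bv})_\bw^{(t')}.
\]
Unravelling the definition, $x \in \lambda_\bu^{(s+t')}$ means $x \in \lambda$, $x \succeq w_{t'}$, $x \not\succeq v_i$ for all $i=1,\ldots,s$, and $x \not\succeq w_j$ for all $j=1,\ldots,t'-1$. The first three conditions are exactly saying $x \in \lambda_{\not\succeq \bv}$ together with $x \succeq w_{t'}$, so this set is precisely $(\lambda_{\not\succeq \bv})_\bw^{(t')}$. Taking $|{\cdot}|$ of both sides and noting that the index $s + t'$ contributes $(s+t'-1) = (t'-1) + s$ to the expression inside the max, I obtain
\[
\max\{|\lambda_\bu^{(s+t')}| + (s+t')-1 \mid 1 \leq t' \leq r\} = \M_2(\lambda_{\not\succeq \bv}, \bw) + s.
\]

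Finally, combining the two ranges, $\M_2(\lambda, \bu)$ is the maximum over $t = 1,\ldots,s+r$, which splits as the maximum of the two sub-maxima computed above, yielding the claimed equation. The only mildly subtle point is the set identification in the second bullet---one must confirm that the conditions $x \succeq w_{t'}$ together with $x \not\succeq v_i$ for all $i$ are really captured by working inside $\lambda_{\not\succeq \bv}$, i.e. that no element of $\lambda_{\succeq w_{t'}}$ fails to appear on the left merely by virtue of lying above some $v_i$---but this is automatic from the definition of $\lambda_\bu^{(s+t')}$, which explicitly excludes such elements.
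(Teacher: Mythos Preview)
Your proposal is correct and follows essentially the same approach as the paper: both split the defining maximum of $\M_2(\lambda,\bu)$ into the index ranges $1\le t\le s$ and $s<t\le s+r$, identify $\lambda_{\bu}^{(t)}=\lambda_{\bv}^{(t)}$ in the first range and $\lambda_{\bu}^{(s+t')}=(\lambda_{\not\succeq\bv})_{\bw}^{(t')}$ in the second, and then read off the two terms on the right-hand side. Your element-by-element verification of the key set identity is a slightly more explicit version of the paper's symbolic manipulation, but the content is identical.
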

\begin{proof}
Note that for \(t = 1, \ldots, s\), we have \(\lambda_{\bu}^{(t)} = \lambda_{\bv}^{(t)}\), and for \(t= s+1, \ldots, s+r\), we have
\begin{align*}
\lambda_{\bu}^{(t)} &= \lambda_{\succeq u_t} \backslash \lambda_{ \succeq \{u_1, \ldots, u_{t-1}\}}
=
(\lambda_{\not \succeq \{u_1, \ldots, u_s\}})_{\succeq u_t} \backslash (\lambda_{\not \succeq \{u_1, \ldots, u_s\}})_{ \succeq \{u_{s+1}, \ldots, u_{t-1}\}}\\
&= (\lambda_{\not \succeq \{v_1, \ldots, v_s\}})_{\succeq w_{t-s}} \backslash (\lambda_{ \succeq \{v_1, \ldots, v_s\}})_{ \succeq\{w_1, \ldots, w_{t-s-1}\}}
=
(\lambda_{\not \succeq \bv})_{\bw}^{(t-s)}.
\end{align*}
Thus we have
\begin{align*}
\M_2(\lambda, \bu)
&=
\max\{ |\lambda_{\bu}^{(t)}| + t - 1 \mid t = 1, \ldots, s+r \}\\
&=
\max \{ \max\{|\lambda_{\bu}^{(t)}| + t -1\mid t=1, \ldots, s\}, \max \{|\lambda_{\bu}^{(t)}| + t - 1 \mid u=s+1, \ldots, s+r\}\}\\
&=\max\{ \max\{|\lambda_{\bv}^{(t)}| + t - 1 \mid t=1, \ldots, r\}, \max\{|(\lambda_{\not \succeq \bv})_{\bw}^{(t-s)}| + t - 1 \mid u=s+1, \ldots, s+r\}\}\\
&=\max\{ \M_2(\lambda, \bv),  \max\{|(\lambda_{\not \succeq \bv})_{\bw}^{(t)}| + t +s - 1 \mid t=1, \ldots, r\}\}\\
&=\max\{ \M_2(\lambda, \bv),  \max\{|(\lambda_{\not \succeq \bv})_{\bw}^{(t)}| + t  - 1 \mid t=1, \ldots, r\} +s\}\\
&= \max \{ \M_2(\lambda, \bv), \M_2(\lambda_{\not \succeq \bv}, \bw) +s\},
\end{align*}
as desired.
\end{proof}

\subsection{
Connecting \(\M_2\) and \(\m_2\)
}

\begin{Theorem}\label{sameq}
Let \(\lambda\) be a nonempty poset. We have
\begin{align}\label{newq2}
\textup{\(\m\)}_2(\lambda) = \min \{\textup{\(\M\)}_2(\lambda, \bu) \mid \textup{\(\bu\) a \(\lambda\)-strategy}\}.
\end{align}
\end{Theorem}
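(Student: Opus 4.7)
The plan is to proceed by strong induction on $|\lambda|$. The base case $|\lambda|=1$ is immediate: the lone element $u$ of $\lambda$ gives both $\m_2(\lambda) = 1$ and the only $\lambda$-strategy $\bu = (u)$, with $\M_2(\lambda, \bu) = 1$.

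For the inductive step, I would fix a nonempty $\lambda$ with $|\lambda| > 1$ and assume the theorem for all nonempty posets of strictly smaller cardinality. The main idea is to partition $\lambda$-strategies by their initial element $u_1$ and align this partition with the recursion in Definition~\ref{qkdef}. For any $u \in \lambda$, direct application of Definition~\ref{Q2def} yields $\M_2(\lambda, (u)) = |\lambda_{\succeq u}|$. I would then split into two subcases according to whether $\lambda_{\succeq u} = \lambda$.

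If $\lambda_{\succeq u} = \lambda$, then $(u)$ is the only $\lambda$-strategy beginning with $u$, contributing $|\lambda|$ to the minimum. Otherwise $\lambda_{\not \succeq u}$ is a nonempty, strictly smaller poset (admitting strategies by Lemma~\ref{EveryShot}), and I would observe that the $\lambda$-strategies starting with $u$ are precisely the sequences $u\bw$ where $\bw$ is a $\lambda_{\not \succeq u}$-strategy; this uses the fact that $u_t \succeq u$ for $t \geq 2$ would force $\lambda_{u\bw}^{(t)} = \varnothing$, so all later elements must lie in $\lambda_{\not \succeq u}$. Applying Lemma~\ref{truncatelem} with $\bv = (u)$ then gives
\[
\M_2(\lambda, u\bw) \;=\; \max\{|\lambda_{\succeq u}|,\, \M_2(\lambda_{\not \succeq u}, \bw) + 1\},
\]
and taking the minimum over $\bw$ while invoking the inductive hypothesis $\min_{\bw} \M_2(\lambda_{\not \succeq u}, \bw) = \m_2(\lambda_{\not \succeq u})$ yields
\[
\min_{\bw}\,\M_2(\lambda, u\bw) \;=\; \max\{|\lambda_{\succeq u}|,\, \m_2(\lambda_{\not \succeq u}) + 1\}.
\]
A direct check shows this formula also describes the first subcase (using $\m_2(\varnothing) = 0$ and $|\lambda_{\succeq u}| = |\lambda|$), so it holds uniformly in $u$. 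Finally, minimizing over $u$ and using $|\lambda_{\succeq u}| = |\lambda_{\succ u}| + 1$ together with $\m_1(\lambda_{\succ u}) = |\lambda_{\succ u}|$ to rewrite gives
\[
\min_{\bu}\,\M_2(\lambda, \bu) \;=\; \min_{u \in \lambda} \max\{|\lambda_{\succ u}|,\, \m_2(\lambda_{\not \succeq u})\} + 1,
\]
which is precisely $\m_2(\lambda)$ by Definition~\ref{qkdef}.

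I expect the main obstacle to be purely bookkeeping: carefully distinguishing $|\lambda_{\succeq u}|$ from $|\lambda_{\succ u}|$ (since $\M_2$ naturally produces the former while the recursion for $\m_2$ uses the latter), and verifying that the edge case $\lambda_{\not \succeq u} = \varnothing$ is absorbed cleanly into the uniform formula. Lemmas~\ref{EveryShot} and~\ref{truncatelem} carry most of the structural load, so once the bijection between $\lambda$-strategies starting with $u$ and $\lambda_{\not \succeq u}$-strategies is established, the remaining manipulations are purely formal.
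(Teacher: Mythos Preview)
Your proof is correct and follows essentially the same approach as the paper's: induction on $|\lambda|$, partitioning $\lambda$-strategies by their first element, applying Lemma~\ref{truncatelem} with $\bv=(u)$, and invoking the inductive hypothesis on $\lambda_{\not\succeq u}$. The only cosmetic difference is that the paper invokes Lemmas~\ref{EveryShot} and~\ref{SingleShot} to restrict attention to strategies of length greater than one, whereas you handle the minimum-element case ($\lambda_{\succeq u}=\lambda$) as a separate subcase and then absorb it into a uniform formula via $\m_2(\varnothing)=0$; both routes arrive at the same recursion.
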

\begin{proof}
We go by induction on \(|\lambda|\). The base case \(|\lambda| = 1\) follows immediately from Lemma~\ref{SingleShot}. Now assume \(|\lambda| > 1\) and the claim holds for all \(|\lambda'| < |\lambda|\). Note that by Lemmas~\ref{EveryShot} and~\ref{SingleShot}, it suffices to take the minimum on the right of (\ref{newq2}) over \(\lambda\)-strategies of length greater than one. Thus we have
\begin{align*}
\min\{ \M_2(\lambda,& \bu) \mid \textup{\(\bu\) a \(\lambda\)-strategy}\}\\
&= \min\{ \M_2(\lambda, \bu) \mid \textup{\(\bu\) a \(\lambda\)-strategy of length greater than one}\}\\
&= \min\{ \M_2(\lambda, u\bw) \mid u \in \lambda, \textup{\(u\bw\) a \(\lambda\)-strategy}\}\\
&=\min\{ \M_2(\lambda, u\bw) \mid u \in \lambda, \textup{\(\bw\) a \(\lambda_{\not \succeq u}\)-strategy}\}\\
&= \min\{ \max\{ \M_2(\lambda, (u)), \M_2(\lambda_{\not \succeq u}, \bw)+1\} \mid u \in \lambda,  \textup{\(\bw\) a \(\lambda_{\not \succeq u}\)-strategy}\}\\
&= \min\{ \max\{ |\lambda_{\succeq u}|, \M_2(\lambda_{\not \succeq u}, \bw)+1\} \mid u \in \lambda, \textup{\(\bw\) a \(\lambda_{\not \succeq u}\)-strategy}\}\\
&= \min\{ \max\{ |\lambda_{\succ u}| + 1, \M_2(\lambda_{\not \succeq u}, \bw)+1\} \mid u \in \lambda,  \textup{\(\bw\) a \(\lambda_{\not \succeq u}\)-strategy}\}\\
&= \min\{ \max\{\m_1(\lambda_{\succ u}) + 1, \M_2(\lambda_{\not \succeq u}, \bw)+1\} \mid u \in \lambda, \textup{\(\bw\) a \(\lambda_{\not \succeq u}\)-strategy}\}\\
&=\min\{
\min\{
 \max\{\m_1(\lambda_{\succ u}) + 1, \M_2(\lambda_{\not \succeq u}, \bw)+1\}
 \mid
 \textup{\(\bw\) a \(\lambda_{\not \succeq u}\)-strategy}
 \}
 \mid
 u \in \lambda\}\\
&= \min\{ \max\{\m_1(\lambda_{\succ u}) + 1, 
\min\{\M_2(\lambda_{\not \succeq u}, \bw)
\mid
\textup{\(\bw\) a \(\lambda_{\not \succeq u}\)-strategy}
\} +1\}\mid u \in \lambda\}\\
&=
\min\{
\max\{ 
\m_1(\lambda_{\succ u}) + 1, \m_2(\lambda_{\not \succeq u}) + 1 \} \mid u \in \lambda\}\\
&=
\min\{ \max\{  \m_2(\lambda_{\not \succeq u}), \m_1(\lambda_{\succ u})\} \mid u \in \lambda\} + 1\\
&= \m_2(\lambda).
\end{align*}
The fourth equality above follows from Lemma~\ref{truncatelem}, and the tenth equality follows from the induction assumption. This completes the induction step, and the proof.
\end{proof}

\iffalse
\begin{Lemma}\label{TRUNCATE}
Let \(\bv = (v_1, \ldots, v_s)\) be a sequence of elements in \(\lambda\). Then
\begin{align*}
\textup{\(
\m_2(\lambda) \leq \max\{ \M_2(\lambda, \bv), \m_2(\lambda_{\not \succeq \bv}) + s\}.
\)}
\end{align*}
\end{Lemma}
\begin{proof}
Let \(\hat{\bv}\) be the subsequence of \(\bv\) consisting of all \(v_i\)'s such that \(\lambda_{\bv}^{(i)} \neq \varnothing\). It follows then that \(\lambda_{\not \succeq \bv} = \lambda_{\not \succeq \hat \bv}\), and \(\M_2(\lambda, \hat{\bv}) \leq \M_2(\lambda, \bv)\).
By Theorem~\ref{sameq}, there exists a \(\lambda_{\not \succeq \bv}\)-strategy \(\bw\) such that \(\M_2(\lambda_{\not \succeq \bv}, \bw) = \m_2(\lambda_{\not \succeq \bv})\). Then \(\hat{\bv} \bw\) is a \(\lambda\)-strategy. We have then that
\begin{align*}
\m_2(\lambda) &=
\min\{\M_2(\lambda, \bu) \mid \textup{\(\bu\) a \(\lambda\)-sequence}\}
\leq \M_2(\lambda, \hat{\bv} \bw)\\
&=\max\{\M_2(\lambda, \hat{\bv}), \M_2(\lambda_{\not \succeq \bv}, \bw) + s\}
\leq\max\{ \M_2(\lambda, \bv), \m_2(\lambda_{\not \succeq \bv}) + s\},
\end{align*}
where the first equality uses Theorem~\ref{sameq} and the second equality uses Lemma~\ref{truncatelem}.
\end{proof}
\fi

\subsection{Some examples}
Combining Theorems~\ref{BOUNDS} and~\ref{sameq} can be a useful method of computing \(\m_2(\lambda)\), as shown in the examples below.

\begin{Example}\label{SolvedEx1}
Let \(\lambda = \llbracket 5,7\rrbracket\), and consider the \(\lambda\)-strategy:
\begin{align*}
\bu = ((4,4), (2,5), (1,4), (1,3), (4,1), (1,2), (2,1), (1,1)).
\end{align*}
Then, as in Example~\ref{nonopex}, we visually represent \(\bu\) in the diagram below:
\begin{align*}
\begin{array}{ccccc}
\hackcenter{
\begin{tikzpicture}[scale=0.55]
%%%BRACES AND LABELS
%
%%%DOTTED LINES FROM BRACES
%%%COLORED BACKGROUNDS FOR SECTIONS
\fill[fill=white] (0,0)--(3,0)--(3,-0.2)--(0,-0.2)--(0,0);
\draw[thick, fill=lightgray!50]  (0,0)--(7,0)--(7,5)--(0,5)--(0,0);
\draw[thick, fill=brown!35]  (0,0)--(7,0)--(7,5)--(0,5)--(0,0);
\draw[thick, fill=blue!40!green!45]  (0,1)--(7,1)--(7,5)--(0,5)--(0,1);
\draw[thick, fill=orange!60]  (1,0)--(7,0)--(7,5)--(1,5)--(1,0);
\draw[thick,  fill=violet!35]  (0,3)--(7,3)--(7,5)--(0,5)--(0,3);
\draw[thick, fill=cyan!25]  (2,0)--(7,0)--(7,5)--(2,5)--(2,0);
\draw[thick, fill=lime]  (3,0)--(7,0)--(7,5)--(3,5)--(3,0);
\draw[thick, fill=yellow]  (4,1)--(7,1)--(7,5)--(4,5)--(4,1);
\draw[thick, fill=pink]  (3,3)--(7,3)--(7,5)--(3,5)--(3,3);
%MINES
%
\draw[thick, dotted] (0,0)--(0,5);
\draw[thick, dotted] (1,0)--(1,5);
\draw[thick, dotted] (2,0)--(2,5);
\draw[thick, dotted] (3,0)--(3,5);
\draw[thick, dotted] (4,0)--(4,5);
\draw[thick, dotted] (5,0)--(5,5);
\draw[thick, dotted] (6,0)--(6,5);
\draw[thick, dotted] (7,0)--(7,5);
\draw[thick, dotted] (0,1)--(7,1);
\draw[thick, dotted] (0,2)--(7,2);
\draw[thick, dotted] (0,3)--(7,3);
\draw[thick, dotted] (0,4)--(7,4);
\draw[thick, dotted] (0,5)--(7,5);
\draw[thick]  (0,0)--(7,0)--(7,5)--(0,5)--(0,0);
\node at (3.5,3.5){ \textcircled{$\scriptstyle{1}$}};
\node at (4.5,1.5){ \textcircled{$\scriptstyle{2}$}};
\node at (3.5,0.5){ \textcircled{$\scriptstyle{3}$}};
\node at (2.5,0.5){ \textcircled{$\scriptstyle{4}$}};
\node at (0.5,3.5){ \textcircled{$\scriptstyle{5}$}};
\node at (1.5,0.5){ \textcircled{$\scriptstyle{6}$}};
\node at (0.5,1.5){ \textcircled{$\scriptstyle{7}$}};
\node at (0.5,0.5){ \textcircled{$\scriptstyle{8}$}};
\end{tikzpicture}
}
\\
\scriptstyle{\textup{{The poset \(\lambda = \llbracket 5,7 \rrbracket\) with \(\lambda\)-strategy \(\bu\)}}} 
\end{array}
\end{align*}
This gives 
\begin{align*}
\M_2(\lambda, \bu) = \max\{ 8+0, 6+1, 6+2, 5+3, 4+4, 3+5, 2+6, 1+7\} = 8.
\end{align*}
Thus by Theorem~\ref{sameq} we have \(\m_2(\lambda) \leq 8\). But by Theorem~\ref{BOUNDS} we also have
\begin{align*}
\m_2(\lambda) \geq \tau_2(|\lambda|) = \tau_2(35) = 8,
\end{align*}
so \(\m_2(\lambda) = 8\).
\end{Example}

\begin{Example}\label{66ex}
Let \(\lambda = \llbracket 6,6 \rrbracket\). As \(|\lambda| = 36\), any \(\lambda\)-strategy \(\bu = (u_1, \ldots, u_r)\) which satisfies \(\M_2(\lambda, \bu) = \tau_2(|\lambda|) = 8\) must have \(r=8\) and \(|\lambda_{\bu}^{(t)}| = 9-t\) for all \(t = 1, \ldots, 8\). It is straightforward to check that no such \(\lambda\)-strategy exists, so by Theorems~\ref{BOUNDS} and~\ref{sameq}, we have \(\m_2(\lambda, \bu) > 8\). Now consider the \(\lambda\)-strategy :
\begin{align*}
\bv = ((5,3),(4,2),(2,4),(3,1),(1,4),(2,1),(1,2),(1,1)).
\end{align*}
We visually represent \(\bv\) in the diagram:
\begin{align*}
\begin{array}{c}
\hackcenter{
\begin{tikzpicture}[scale=0.58]
%%%BRACES AND LABELS
%
%%%DOTTED LINES FROM BRACES
%%%COLORED BACKGROUNDS FOR SECTIONS
\fill[fill=white] (0,0)--(3,0)--(3,-0.2)--(0,-0.2)--(0,0);
\draw[thick, fill=brown!35]  (0,0)--(6,0)--(6,6)--(0,6)--(0,0);
\draw[ thick,  fill=blue!40!green!45]  (1,0)--(6,0)--(6,6)--(1,6)--(1,0);
\draw[ thick,  fill=orange!60]  (0,1)--(6,1)--(6,6)--(0,6)--(0,1);
\draw[thick,  fill=violet!35]  (3,0)--(6,0)--(6,6)--(3,6)--(3,0);
\draw[thick,  fill=cyan!25]  (0,2)--(6,2)--(6,6)--(0,6)--(0,2);
\draw[thick, fill=lime]  (3,1)--(6,1)--(6,6)--(3,6)--(3,1);
\draw[thick, fill=yellow]  (1,3)--(6,3)--(6,6)--(1,6)--(1,3);
\draw[thick, fill=pink]  (2,4)--(6,4)--(6,6)--(2,6)--(2,4);
%\draw[ thick, fill=lime]  (20,0)--(23,0)--(23,5)--(20,5)--(20,0); 
%\draw[ thick, fill=yellow]  (17,2)--(23,2)--(23,5)--(17,5)--(17,2); 
%%%BLACK LINES IN SHAPE
\draw[thick, dotted] (0,0)--(0,6);
\draw[thick, dotted] (1,0)--(1,6);
\draw[thick, dotted] (2,0)--(2,6);
\draw[thick, dotted] (3,0)--(3,6);
\draw[thick, dotted] (4,0)--(4,6);
\draw[thick, dotted] (5,0)--(5,6);
\draw[thick, dotted] (0,1)--(6,1);
\draw[thick, dotted] (0,2)--(6,2);
\draw[thick, dotted] (0,3)--(6,3);
\draw[thick, dotted] (0,4)--(6,4);
\draw[thick, dotted] (0,5)--(6,5);
%%%NUMBER LABELS AND VERTICAL EDGES FOR SHOTS
\node at (2.5,4.5){ \textcircled{$\scriptstyle{1}$}};
\node at (1.5,3.5){ \textcircled{$\scriptstyle{2}$}};
\node at (3.5,1.5){ \textcircled{$\scriptstyle{3}$}};
\node at (0.5,2.5){ \textcircled{$\scriptstyle{4}$}};
\node at (3.5,0.5){ \textcircled{$\scriptstyle{5}$}};
\node at (0.5,1.5){ \textcircled{$\scriptstyle{6}$}};
\node at (1.5,0.5){ \textcircled{$\scriptstyle{7}$}};
\node at (0.5,0.5){ \textcircled{$\scriptstyle{8}$}};
%
%\node at (12.5,2.5){ $\lambda'$};
%
%\node[] at (18,4.5) {$2$};
\end{tikzpicture}
}
\\
\scriptstyle{\textup{{The poset \(\lambda = \llbracket 6,6 \rrbracket\) with \(\lambda\)-strategy \(\bv\)}}} 
\end{array}
\end{align*}
This gives \(\M_2(\lambda, \bv) = 9\), so it follows from Theorem~\ref{sameq} that \(\m_2(\lambda) = 9\).
\end{Example}

\subsection{Strategies for Problem~\ref{P1} in the \(k=2\) case}
We now relate these definitions and results back to Problem~\ref{P1}, in the case where only two positive query results are permitted. Recall as in \S\ref{qkexp} that we have the unknown ideal \(\mu = \varnothing\) or \(\mu = \lambda_{\preceq x}\) for some \(x \in \lambda\). The \(\lambda\)-strategy \(\bu = (u_1, \ldots, u_r)\) defines a search strategy for \(\mu\) as follows.

We query the elements \(u_1, u_2, \ldots\) in sequence, until we have a positive query. If all the queries are negative, then, since \(\lambda_{\succeq \bu} = \lambda\), we have that \(\mu = \varnothing\), and we are done after \(r \leq |\lambda_{\bu}^{(r)}| + r -1\) queries. 
Assume the query of \(u_t\) is positive. Then the element \(x\) is known to belong to \(\lambda_{\succeq u_t}\), and known to not belong to \(\lambda_{\succeq \{u_1, \ldots, u_{t-1}\}}\). Thus \(x\) may be any of the elements in \(\lambda_{\bu}^{(t)}\). With one positive query remaining, the elements in \(\lambda_{\bu}^{(t)} \backslash \{u_t\}\) must be sequentially queried in any non-increasing order, as in \S\ref{k1case}. Thus, when the \(u_t\) query is positive, \(|\lambda_{\bu}^{(t)}| +t -1\) total queries are necessary to guarantee identification of \(\mu\).

Therefore, by Definition~\ref{Q2def}, the value \(\M_2(\lambda, \bu)\) represents the maximum number of queries necessary to identify \(\mu\) via the search strategy defined by \(\bu\). Thus, in view of Theorem~\ref{sameq}, we may reframe the \(k=2\) case of Problem~\ref{P1} in this combinatorial language: 

\begin{pppp}
Find the value \(\m_2(\lambda)\), and identify a \(\lambda\)-strategy \(\bu\) such that \(\M_2(\lambda, \bu) = \m_2(\lambda)\). 
\end{pppp}

\section{Product posets of finite totally ordered sets}

If \(u=(a,b) \in \NN^2\), we define the {\em transpose} element \(u^T := (b,a)\).
We extend this definition to sequences of elements \(\bu = (u_1,\ldots, u_r)\) in \(\NN^2\) and subsets \(S \subset \NN^2\) by setting:
\begin{align*}
\bu^T := (u_1^T, \ldots, u_r^T),
\qquad
\qquad
S^T := \{s^T \mid s \in S\}
\end{align*}
The transpose map induces an isomorphism of posets \(\llbracket m,n \rrbracket \cong \llbracket n,m \rrbracket\), for all \(m,n \in \NN\).

In this section it will be convenient to make use of a horizontally compressed visual shorthand for sequences of elements \(\bv = (v_1, \ldots, v_r)\) in \(\lambda = \llbracket m,n \rrbracket\). Using the `box array' representation of \(\llbracket m,n \rrbracket\), we will label the element \(v_i\) with \( \textcircled{$\scriptstyle{i}$}\) as usual, and then label every row in \(\lambda_{\bv}^{(i)}\) with the number of elements in that row. This visual information is sufficient to describe exactly all elements \(v_i\) in \(\bv\), and the related sets \(\lambda_{\bu}^{(i)}\).

\begin{Example}
Let \(\lambda = \llbracket 3,17 \rrbracket\). If \(\bv = ((3,9), (2,13), (2,6), (1,15), (3,2), (1,4))\), then below we have the explicit visual representation of \(\bv\) (on the left) and the compressed shorthand representation of \(\bv\) (on the right).
\begin{align*}
{}
\hackcenter{
\begin{tikzpicture}[scale=0.58]
%%%BRACES AND LABELS
%%%DOTTED LINES FROM BRACES
%%%COLORED BACKGROUNDS FOR SECTIONS
\draw[ thick, fill=lightgray!70]  (0,0)--(17,0)--(17,3)--(0,3)--(0,0); 
\draw[thick,  fill=orange!60]  (3,0)--(17,0)--(17,3)--(3,3)--(3,0); 
\draw[thick,  fill=violet!35]  (1,2)--(17,2)--(17,3)--(1,3)--(1,2); 
\draw[ thick, fill=lime]  (5,1)--(17,1)--(17,3)--(5,3)--(5,1); 
\draw[ thick, fill=pink]  (8,2)--(17,2)--(17,3)--(8,3)--(8,2); 
\draw[ thick, fill=yellow]  (12,1)--(17,1)--(17,2)--(12,2)--(12,1); 
\draw[thick,  fill=cyan!25]  (14,0)--(17,0)--(17,1)--(14,1)--(14,0); 
%%%BLACK LINES IN SHAPE
%\draw[thick, dotted] (4,0)--(17,0)--(17,3)--(2,3)--(2,0);%Big rectangle
\draw[thick, dotted] (0,1)--(17,1);
\draw[thick, dotted] (0,2)--(17,2);
\draw[thick, dotted] (1,0)--(1,3);
\draw[thick, dotted] (2,0)--(2,3);
\draw[thick, dotted] (3,0)--(3,3);
\draw[thick, dotted] (4,0)--(4,3);
\draw[thick, dotted] (5,0)--(5,3);
\draw[thick, dotted] (6,0)--(6,3);
\draw[thick, dotted] (7,0)--(7,3);
\draw[thick, dotted] (8,0)--(8,3);
\draw[thick, dotted] (9,0)--(9,3);
\draw[thick, dotted] (10,0)--(10,3);
\draw[thick, dotted] (11,0)--(11,3);
\draw[thick, dotted] (12,0)--(12,3);
\draw[thick, dotted] (13,0)--(13,3);
\draw[thick, dotted] (14,0)--(14,3);
\draw[thick, dotted] (15,0)--(15,3);
\draw[thick, dotted] (16,0)--(16,3);
%\draw[thick, dotted] (4,2)--(17,2);
%\draw[thick, dotted] (2,1)--(3,1);
%\draw[thick, dotted] (2,2)--(3,2);
%%%NUMBER LABELS AND VERTICAL EDGES FOR SHOTS
\node at (8.5,2.5) { \textcircled{$\scriptstyle{1}$}};
\node at (12.5,1.5) { \textcircled{$\scriptstyle{2}$}};
\node at (14.5,0.5) { \textcircled{$\scriptstyle{4}$}};
\node at (5.5,1.5) { \textcircled{$\scriptstyle{3}$}};
\node at (1.5,2.5) { \textcircled{$\scriptstyle{5}$}};
\node at (3.5,0.5) { \textcircled{$\scriptstyle{6}$}};
\end{tikzpicture}
}
\;\;
\leftrightarrow
\;\;
\hackcenter{
\begin{tikzpicture}[scale=0.58]
%%%BRACES AND LABELS
%%%DOTTED LINES FROM BRACES
%%%COLORED BACKGROUNDS FOR SECTIONS
\draw[ thick, fill=lightgray!70]  (4,0)--(12,0)--(12,3)--(4,3)--(4,0); 
\draw[thick,  fill=orange!60]  (6,0)--(12,0)--(12,3)--(6,3)--(6,0); 
\draw[ thick,  fill=violet!35]  (5,2)--(12,2)--(12,3)--(5,3)--(5,2); 
\draw[ thick, fill=lime]  (7,1)--(12,1)--(12,3)--(7,3)--(7,1); 
\draw[ thick, fill=pink]  (8,2)--(12,2)--(12,3)--(8,3)--(8,2); 
\draw[ thick, fill=yellow]  (9,1)--(12,1)--(12,2)--(9,2)--(9,1); 
\draw[thick,  fill=cyan!25]  (10,0)--(12,0)--(12,1)--(10,1)--(10,0); 
%%%BLACK LINES IN SHAPE
%\draw[thick, dotted] (4,0)--(17,0)--(17,3)--(2,3)--(2,0);%Big rectangle
\draw[thick, dotted] (4,1)--(12,1);
\draw[thick, dotted] (4,2)--(12,2);
%\draw[thick, dotted] (4,2)--(17,2);
%\draw[thick, dotted] (2,1)--(3,1);
%\draw[thick, dotted] (2,2)--(3,2);
%%%NUMBER LABELS AND VERTICAL EDGES FOR SHOTS
\node at (8.5,2.5) { \textcircled{$\scriptstyle{1}$}};
\node at (9.5,1.5) { \textcircled{$\scriptstyle{2}$}};
\node at (10.5,0.5) { \textcircled{$\scriptstyle{4}$}};
\node at (7.5,1.5) { \textcircled{$\scriptstyle{3}$}};
\node at (5.5,2.5) { \textcircled{$\scriptstyle{5}$}};
\node at (6.5,0.5) { \textcircled{$\scriptstyle{6}$}};
\node[] at (10,2.5) {$9$};
\node[] at (11.3,0.5) {$3$};
\node[] at (10.5,1.5) {$5$};
\node[] at (7.5,2.5) {$3$};
\node[] at (8,0.5) {$11$};
\node[] at (8.3,1.5) {$7$};
\node[] at (6.5,2.5) {$4$};
\node[] at (6.5,1.5) {$2$};
\end{tikzpicture}
}
\end{align*}
\end{Example}

Now we prove the second main theorem of this paper.

\begin{Theorem}\label{mainthm}
Let \(\lambda = \llbracket m,n\rrbracket\), with \(m \leq 6\) or \(n \leq 6\). Then we have:
\begin{align*}
\textup{\(\m_2(\lambda)\)} = \begin{cases}
9 & \textup{if }m=n=6;\\
\tau_2(mn) & \textup{otherwise}.
\end{cases}
\end{align*}
Moreover, Algorithm~\ref{Alg} below produces an explicit \(\lambda\)-strategy \(\bu\) such that \textup{\(\M_2(\lambda, \bu) =\m_2(\lambda)\)}. 
\end{Theorem}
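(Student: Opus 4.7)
The lower bound \(\m_2(\lambda) \geq \tau_2(mn)\) is immediate from Theorem~\ref{BOUNDS}, and in the case \(m=n=6\) the strengthening \(\m_2(\llbracket 6,6 \rrbracket) \geq 9\) was established in Example~\ref{66ex}. By the transpose isomorphism \(\llbracket m,n \rrbracket \cong \llbracket n,m \rrbracket\) combined with Lemma~\ref{isomq}, we may assume without loss of generality that \(m \leq 6\). The remaining task is to exhibit, for each such pair \((m,n)\), an explicit \(\lambda\)-strategy \(\bu\) for which \(\M_2(\lambda,\bu)\) equals the claimed value, since Theorem~\ref{sameq} will then yield \(\m_2(\lambda) \leq \M_2(\lambda,\bu)\).

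The cases \(m = 1\) and \(m = n = 6\) follow from Lemma~\ref{TOTAL} and Example~\ref{66ex} respectively, so the substance of the proof is to handle \(m \in \{2,3,4,5,6\}\) with \((m,n) \neq (6,6)\). The plan is to proceed by induction on \(n\), with Lemma~\ref{truncatelem} as the key inductive tool: having chosen an initial subsequence \(\bv = (u_1, \ldots, u_s)\), one reduces the problem on \(\lambda\) to the analogous problem on the smaller poset \(\lambda_{\not\succeq \bv}\). The art is in choosing \(\bv\) so that \(\M_2(\lambda,\bv) \leq \tau_2(mn)\) while \(\lambda_{\not\succeq \bv}\) is isomorphic to, or decomposes into, smaller product posets whose optimal strategies are already known inductively and whose \(\m_2\)-values fit within the remaining budget \(\tau_2(mn) - s\).

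The natural initial move is \(u_1 = (a,b)\) with \((m-a+1)(n-b+1) \leq \tau_2(mn)\), chosen to make \(\lambda_{\succeq u_1}\) as large as possible within this bound. The remaining region \(\lambda_{\not\succeq u_1}\) is L-shaped, and we continue by choosing \(u_2, u_3, \ldots\) to carve off rectangles of permissible sizes \(\tau_2(mn) - 1, \tau_2(mn) - 2, \ldots\) in turn. The precise choice of \((a,b)\) is dictated by the residues of \(\tau_2(mn)\) and of \(mn\) modulo \(m\) and \(n\), and by how close \(mn\) is to \(T_2(\tau_2(mn))\); the number-theoretic results of \S\ref{TriResults}---especially Lemmas~\ref{WrongWayLemma}, \ref{TauLemma}, and \ref{BigTLem}---control these congruences and ensure that the successive piece sizes satisfy \(|\lambda^{(t)}_{\bu}| + t - 1 \leq \tau_2(mn)\) for every \(t\).

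The main obstacle I anticipate is the case-by-case verification itself. For each fixed \(m \in \{2,3,4,5,6\}\), one must split into subcases based on \(n \bmod m\) (and in some regimes on more refined congruence data), and within each subcase track the sizes of the carved-off rectangles against the triangular-number budget. Particular care is required near the boundary where \(mn\) is close to or equals \(T_2(\tau_2(mn))\), since in that regime every piece size is tightly constrained and essentially no slack remains. The \(m=n=6\) exception is precisely this phenomenon taken to its extreme: \(T_2(8) = 36 = mn\) would require pieces of sizes exactly \(8, 7, \ldots, 1\) in a \(6 \times 6\) grid, which the geometry cannot accommodate, forcing an extra query. Confirming that analogous obstructions do not arise for the other small pairs \((m,n)\) with \(m \leq 6\) is the heart of what makes Algorithm~\ref{Alg} delicate to specify, and the case analysis to verify.
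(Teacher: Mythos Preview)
Your outline matches the paper's approach closely: lower bound from Theorem~\ref{BOUNDS}, the \(\llbracket 6,6\rrbracket\) case from Example~\ref{66ex}, transpose reduction, and an induction driven by Lemma~\ref{truncatelem} with the congruence lemmas of \S\ref{TriResults} controlling the piece sizes. The paper organises the induction around three explicit conditions on the initial segment \(\bv\): (C1) \(\M_2(\lambda,\bv)\le t\), (C2) \(|\lambda_{\succeq\bv}|+T_2(t)-|\lambda|\ge st-T_2(s-1)\) (which via Lemma~\ref{TauLemma} gives \(\tau_2(|\lambda_{\not\succeq\bv}|)\le t-s\)), and (C3) \(\lambda_{\not\succeq\bv}\neq\llbracket 6,6\rrbracket\); your sketch captures (C1) and (C2) in spirit.

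There is, however, one genuine point you have not identified. The induction hypothesis you need is \(\m_2(\nu)=\tau_2(|\nu|)\) for the smaller residual poset \(\nu=\lambda_{\not\succeq\bv}\), and this statement is \emph{false} when \(\nu=\llbracket 6,6\rrbracket\). Because Algorithm~\ref{Alg} is recursive, one must verify that no run of the algorithm ever produces \(\llbracket 6,6\rrbracket\) as an intermediate residual---not merely that the \emph{initial} pair \((m,n)\) is not \((6,6)\). The paper devotes a separate ten-case argument to exactly this (condition (C3)), and it is not automatic: for example, with \(m=6\) one has to rule out that peeling off the first block from some \(\llbracket 6,n\rrbracket\) leaves \(\llbracket 6,6\rrbracket\). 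Your closing remark about ``analogous obstructions'' for the initial pairs does not cover this recursive hazard. A minor related point: the induction should be on \(|\lambda|\) rather than on \(n\), since after a transpose in \textup{({\tt Step}~1)} the roles of rows and columns can swap between iterations.
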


\begin{Algorithm}\label{Alg} \(\) We assume \(\lambda = \llbracket m,n \rrbracket\), with one of \(m,n \) less than or equal to 6. This algorithm produces a \(\lambda\)-strategy \(\bu\) such that \(\M_2(\lambda, \bu) = \m_2(\lambda)\).\\

\noindent({\tt Step} 0) Let \(\bu = () \) be the empty sequence. Go to \(({\tt Step }\;1)\).\\

\noindent({\tt Step} 1)  If the number of columns of \(\lambda\) is greater than the number of rows, then redefine \(\lambda:=\lambda^{T}\), and set \({\tt flip}=1\). Otherwise set \({\tt flip}=0\). Redefine \(m,n\) if necessary such that \(\lambda = \llbracket m,n \rrbracket\). Go to \(({\tt Step } \;m+1)\).\\

\noindent({\tt Step} 2,  \(\lambda = \llbracket 1,n\rrbracket)\). Define \(t := \tau_2(n)\). Define \(\bv\) to be the one-element sequence in \(\lambda\) depicted below. Go to ({\tt Step} 8).
\begin{align*}
% [inline block 0: 24 envs, 62926 chars -> data_tex | \begin{array}{c} \hackcenter{...]

\end{align*}
\\

%%%%%%%%%%%%%%%%%%%%%%%%%%%%%%%%%%%%%%%

\noindent({\tt Step} 8) Set \(\lambda' = \lambda_{\not \succeq \bv}\). If \({\tt flip} = 1\), set \(\bv := \bv^T\) and \(\lambda' := (\lambda')^T\). Redefine \(\bu\) to be the concatenation \(\bu\bv\). Go to ({\tt Step} 9).\\

\noindent({\tt Step} 9) If \(\lambda' = \varnothing\), END and return \(\bu\). Otherwise, redefine \(\lambda:= \lambda'\) and go to ({\tt Step} 1). 
\end{Algorithm}

\begin{proof}
First, one must check that the algorithm is well-defined; this entails verifying that the diagrams depicted in ({\tt Steps} 2--7) describe a valid element sequence \(\bv\) (in particular, that the row labels are non-negative integers), and rests on the modular conditions for \(t\) below each diagram. This is a straightforward exercise, and is left to the reader.

To begin, we consider the case \(\lambda = \llbracket 6,6 \rrbracket\). As discussed in Example~\ref{66ex}, an exhaustive check shows that \(\M_2(\lambda, \bw) > 8\) for all \(\lambda\)-strategies \(\bw\), so \(\m_2(\lambda) > 8\). The \(\lambda\)-strategy defined in ({\tt Step} 7) of the algorithm yields \(8 < \m_2(\lambda) \leq \M_2(\lambda, \bu) =9\), so we have \(\m_2(\lambda) = \M_2(\lambda,\bu) = 9\), as desired. 

%Secondly, we consider the case \(\lambda = (20^6)\). By Theorem~\ref{LowerBoundTheorem}, we have \(\m(\lambda) \geq \tau(|\lambda|) = \tau(120) = 15\). The \(\bu\)-sequence defined in ({\tt Step} 7) of the algorithm yields \(15 \leq \m(\lambda) \leq \M(\lambda, \bu) =15\), so we have \(\m(\lambda) = 15\), as desired.

With that special case out of the way, we now prove, for all other diagrams under consideration, that Algorithm~\ref{Alg} produces a \(\lambda\)-strategy \(\bu\) such that \(\m_2(\lambda) = \M_2(\lambda, \bu) = \tau_2(|\lambda|)\). We go by induction on \(|\lambda|\). 
The base case \(\lambda = \llbracket 1,1 \rrbracket\) is clear, as the algorithm produces \(\bu = ((1,1))\), and so \(\M_2(\lambda, \bu) = \m_2(\lambda) = 1\).

Now let \( \lambda = \llbracket m,n \rrbracket\), where \(m \leq 6\) or \(n \leq 6\), and \(m,n \) are not both 6. Make the induction assumption that, if \(\nu\) satisfies these conditions as well, with \(|\nu| < |\lambda|\), then Algorithm~\ref{Alg} produces a \(\nu\)-strategy \(\bw\) such that \(\m_2(\nu) = \M_2(\nu, \bw) = \tau_2(|\nu|)\).

Via the transpose operations in ({\tt Steps} 1,8), it is enough to consider the `horizontally-oriented' situation \(m \leq n\), so we make that additional assumption now. We insert \(\lambda = \llbracket m,n \rrbracket\) into Algorithm~\ref{Alg}, letting \(t = \tau_2(|\lambda|) = \tau_2(mn)\), and letting the element sequence \(\bv = (v_1, \ldots, v_s)\) be as it stands at the end of ({\tt Step} 8) in the first loop of the algorithm. We begin by arguing that \(\lambda, \bv\) satisfy the following three conditions:

\begin{enumerate}
\item[(C1)] \(\M_2(\lambda, \bv) \leq t\).
\item[(C2)] \(|\lambda_{\succeq \bv}| +T_2(t)  - |\lambda| \geq  st - T_2({s-1})\)
\item[(C3)] \(\lambda_{\not \succeq \bv} \neq \llbracket 6,6 \rrbracket\).
\end{enumerate}

First we check that (C1) is satisfied by considering every diagram in ({\tt Steps} 2--7), save for the \(\llbracket 6,6 \rrbracket\) diagram. The homogeneously-colored component of the diagram marked with the element \({ \textcircled{$\scriptstyle{i}$}}\) in the southwest corner is exactly the set \(\lambda_{\bv}^{(i)}\). By adding up the elements in each row of \(\lambda_{\bv}^{(i)}\), it is straightforward to check that in all cases, we have  \(|\lambda_{\bv}^{(i)}| \leq t - i + 1\). Then we have:
\begin{align*}
\M_2(\lambda, \bv) = \max\{ |\lambda_{\bv}^{(i)}| + i - 1 \mid i = 1, \ldots, s\}
\leq \max\{ ( t - i + 1) + i -1 \mid i = 1, \ldots, s\}
=
t.
\end{align*}

Now we check that \(\lambda, \bv\) satisfy (C2) by considering every diagram in ({\tt Steps} 2--7), save for the \(\llbracket 6,6 \rrbracket\) diagram. We do so in the separate Cases 1--7 below.

(Case 1) Consider the small cases of the form:
\begin{itemize}
\item \(m=3\), \(n \in \{4, 5\}\) (and so \(t = 5\))
\item \(m=4\), \(n \in \{4,\ldots, 7\}\) (and so \(t  \in \{6,7\}\))
\item \(m=5\), \(n \in \{6,7\}\) (and so \(t = 8\))
\item \(m=6\), \(n \in \{7, \ldots, 11\}\) (and so \(t  \in\{ 9, \ldots, 11\}\))
\end{itemize}
In all these cases, we have \(s = 1\), and \(|\lambda_{\succeq \bv}| = n\). It is easily checked on a case-by-case basis that \(T_2(t) - |\lambda| = T_2(t) - mn \geq t-n\), so we have
\begin{align*}
|\lambda_{\succeq \bv}| + T_2(t) - |\lambda| 
\geq
n + (t-n)
=
t
=
 t - T_2(0),
\end{align*}
satisfying (C2).

(Case 2) Consider the small cases of the form:
\begin{itemize}
\item \(m=6\), \(n \in \{16, 17, 18, 19, 23, 24\}\) (and so \(t  \in\{14, 15, 17\}\))
\end{itemize}
In all these cases, we have \(s = 1\), and \(|\lambda_{\geq \bv}| = 6 \cdot \lfloor \frac{t}{6}\rfloor = 12\). It is easily checked on a case-by-case basis that \(12 + T_2(t) - 6n
\geq 
t\), so we have
\begin{align*}
|\lambda_{\succeq \bv}| + T_2(t) - |\lambda| 
=
12 + T_2(t) - 6n
\geq 
t
= t - T_2(0),
\end{align*}
satisfying (C2).

(Case 3) Consider the case \(\lambda =\llbracket m,n \rrbracket\), where \(3 \leq m \leq 6\), and \( t \equiv 1 \pmod m\), as in ({\tt Steps} 4,5,6,7). Then \(s=1\), and \(T_2(t) \not \equiv 0 \pmod m\) by Lemma~\ref{BigTLem}. We also have \(|\lambda| \equiv 0 \pmod m\), so \(T_2({t}) - |\lambda| \geq 1\) follows by Lemma~\ref{DiffLem}. Therefore
\begin{align*}
|\lambda_{\succeq \bv}| + T_2(t) - |\lambda| 
\geq
|\lambda_{\succeq \bv}| + 1
=
m \left\lfloor \frac{t}{m} \right\rfloor +1 = m\cdot  \frac{t-1}{m} +1 = t
=t - T_2(0),
\end{align*}
satisfying (C2).

(Case 4) Consider the case \(\lambda = \llbracket 5,n \rrbracket\) and  \(t \equiv 2 \pmod 5\), as in ({\tt Step} 6). Then \(s=1\), and \(T_2(t)  \equiv T_2(2) \equiv 3 \pmod 5\) by Lemma~\ref{BigTLem}. We also have \(|\lambda| \equiv 0 \pmod 5\), so \(T_2({t}) - |\lambda| \geq 3\) follows by Lemma~\ref{DiffLem}. Therefore
\begin{align*}
|\lambda_{\succeq \bv}| + T_2(t) - |\lambda| 
\geq
|\lambda_{\succeq \bv}| + 3
=
5 \left\lfloor \frac{t}{5} \right\rfloor +3 = 5\cdot  \frac{t-2}{5} +3 = 
t+1
>
t
=t - T_2(0),
\end{align*}
satisfying (C2).

(Case 5) Consider the case \(\lambda = \llbracket 5,n \rrbracket\) and \(t \equiv 3 \pmod 5\). Then we have \(t \equiv 3 \pmod{10}\) or \(t \equiv 8 \pmod{10}\) as in  ({\tt Step} 6). Then in either case \(s=3\), and \(T_2(t)  \equiv T_2(3) \equiv 1 \pmod 5\) by Lemma~\ref{BigTLem}. We also have \(|\lambda| \equiv 0 \pmod 5\), so \(T_2({t}) - |\lambda| \geq 1\) follows by Lemma~\ref{DiffLem}.
Therefore
\begin{align*}
|\lambda_{\succeq \bv}| + T_2(t) - |\lambda| 
\geq
|\lambda_{\succeq \bv}| + 1
=
(3t-4) + 1= 
3t- 3
=3 t - T_2(2),
\end{align*}
satisfying (C2).

(Case 6) Consider the case \(\lambda = \llbracket 6,n \rrbracket\) and \(16 \leq t \equiv 4 \pmod 6\), as in ({\tt Step} 7). Then \(s=2\), and \(T_2(t) \not \equiv 0 \pmod 6\) by Lemma~\ref{BigTLem}. We also have \(|\lambda| \equiv 0 \pmod 6\), so \(T_2({t}) - |\lambda| \geq 1\) follows by Lemma~\ref{DiffLem}. Therefore 
\begin{align*}
|\lambda_{\succeq \bv}| + T_2(t) - |\lambda| 
\geq 
|\lambda_{\succeq \bv}| + 1
=
(2t-2) + 1
= 2t-1
= 2t - T_2(1),
\end{align*}
satisfying (C2).

(Case 7) Now we may consider the remaining cases in one fell swoop. In all remaining cases, it may be checked that \(\bv\) is defined such that \(|\lambda_{\bv}^{(i)}| = t- i + 1\) for \(t=1, \ldots, s\), and thus \(|\lambda_{\succeq \bv}| = st - T_2({s-1})\). Therefore we have
\begin{align*}
|\lambda_{\succeq \bv}| + T_2(t) - |\lambda| \geq |\lambda_{\succeq \bv}| = st - T_2({s-1}),
\end{align*}
satisfying (C2).

Now we check that \(\lambda,\bv\) satisfy (C3). The case \(\lambda =\llbracket m,n \rrbracket\) for \(m < 6\) is obvious. Thus we may assume that \(\lambda = \llbracket 6,n \rrbracket\). As with the last claim, we check (C3) in the separate Cases 1--10 below.

(Case 1) If \(7 \leq n \leq 11\), then \(\lambda_{\not \succeq \bv}\) is a 5-row diagram, so is not equal to \(\llbracket 6,6 \rrbracket\).

(Case 2) If \(12 \leq n \leq 15\), then \(t \in \{12, 13\}\), so \(t \equiv 0, 1 \pmod 6\). Then \(\lfloor \frac{t}{6} \rfloor  = 2\), and we have
\(
|\lambda_{\not \succeq \bv}| = |\lambda| - 6 \cdot \left \lfloor \frac{t}{6} \right \rfloor \geq 12\cdot 6 - 12 = 60,
\)
so \(\lambda_{\not \succeq \bv} \neq \llbracket 6,6 \rrbracket\). 

(Case 3) If \(n \in \{16, 17, 18, 19, 23, 24\}\), then \(t \in \{14, 15, 17\}\), so \(\lfloor \frac{t}{6} \rfloor  = 2\), and we have
\(
|\lambda_{\not \succeq \bv}| = |\lambda| - 6 \cdot \left \lfloor \frac{t}{6} \right \rfloor \geq 16\cdot 6 - 12 = 84,
\)
so \(\lambda_{\not \succeq \bv} \neq \llbracket 6,6 \rrbracket\). 

(Case 4) If \(n = 20\), then \(\lambda_{\not \succeq \bv} = \varnothing \neq \llbracket 6,6 \rrbracket\).

(Case 5) If \(n \in \{21, 22\}\), then \(t = 16\). Then \(|\lambda_{\not \succeq \bv}| = |\lambda| - (2t -2) \geq (21 \cdot 6) - 30 = 96\), so \(\lambda_{\not \succeq \bv} \neq \llbracket 6,6 \rrbracket\).

(Case 6) If \(n=25\), then \(t = 17\). Then \(|\lambda_{\not \succeq \bv}| = |\lambda| - (8t - 28) = (25 \cdot 6) - (8 \cdot 17 - 28) = 42\), so \(\lambda_{\not \succeq \bv} \neq \llbracket 6,6 \rrbracket\).

In the remaining cases, we assume that \(n \geq 26\). Then we have \(t \geq 18\). Note that by the definition of \(t = \tau_2(|\lambda|)\), we have \(|\lambda| > T_2({t-1})\).

(Case 7) If \(t \equiv 0, 1 \pmod 6\), then
\begin{align*}
|\lambda_{\not \succeq \bv}| &= |\lambda| - 6 \left\lfloor \frac{t}{6} \right\rfloor > T_2({t-1}) - t = T_2({t-1}) - (t-1) -1\\
&= T_2({t-2}) -1 \geq T_2({16}) -1 = 135,
\end{align*}
so \(\lambda_{\not \succeq \bv} \neq \llbracket 6,6 \rrbracket\).

(Case 8) Say \(t \equiv 4 \pmod 6\). Then
\begin{align*}
|\lambda_{\not \succeq \bv}| &= |\lambda| - (2t-2) > T_2({t-1}) - (2t-2)\\
&= T_2({t-1}) - (t-1) - (t-2) -1 = T_2({t-3}) - 1 \geq T_2({15}) - 1 = 119,
\end{align*}
so \(\lambda_{\not \succeq \bv} \neq \llbracket 6,6 \rrbracket\).

(Case 9) Say \(t \equiv 3 \pmod 6\). Then 
\begin{align*}
|\lambda_{\not \succeq \bv}| &= |\lambda| - (7t - 21) > T_2({t-1}) - (t-1) - \cdots  - (t-7) - 7\\
&= T_2({t-8}) - 7 \geq T_2({10}) - 7 = 48, 
\end{align*}
so \(\lambda_{\not \succeq \bv} \neq \llbracket 6,6 \rrbracket\).

(Case 10) Say \(t \equiv 2,5 \pmod 6\). Then
\begin{align*}
|\lambda_{\not \succeq \bv}| &= |\lambda| - (8t - 28) > T_2({t-1}) - (t-1) - \cdots - (t-8) - 8\\
&= T_2({t-9}) - 8 \geq T_2(9) - 8  = 37,
\end{align*}
so \(\lambda_{\not \succeq \bv} \neq \llbracket 6,6 \rrbracket\).

Thus, in every case we have \(\lambda_{\not \succeq \bv} \neq \llbracket 6,6 \rrbracket\), and so (C3) holds. 

Therefore (C1), (C2), (C3) hold for \(\lambda, \bv\). By (C2), we have 
\begin{align*}
|\lambda_{\not \succeq \bv}| = |\lambda| - |\lambda_{\succeq \bv}|
\leq
T_2({\tau_2(|\lambda|)}) - s \tau_2(|\lambda|) + T_2({s-1}),
\end{align*}
so by Lemmas~\ref{increasing} and \ref{TauLemma} we have
\begin{align}\label{eqn5}
\tau_2(|\lambda_{\not \succeq \bv}|)
\leq 
\tau_2(T_2({\tau_2(|\lambda|)}) - s \tau_2(|\lambda|) + T_2({s-1}))
\leq
 \tau_2(|\lambda|) - s.
\end{align}
By (C3), the induction assumption holds for \(\lambda_{\not \succeq \bv}\), so inserting \(\lambda_{\not \succeq \bv}\) into the algorithm yields a \(\lambda_{\not \succeq \bv}\)-strategy \(\bw\) such that \(\M_2(\lambda_{\not \succeq \bv}, \bw) = \m_2(\lambda_{\not \succeq \bv}) = \tau_2(|\lambda_{\not \succeq \bv}|)\). By the inductive nature of the algorithm, inserting \(\lambda\) into the algorithm yields the \(\lambda\)-strategy \(\bu= \bv\bw\). Then we have
\begin{align*}
\M_2(\lambda, \bu) &= \max\{ \M_2(\lambda, \bv), \M_2(\lambda_{\not \succeq \bv}, \bw)+s\} & \textup{by Lemma~\ref{truncatelem}}\\
& \leq  \max\{ \tau_2(|\lambda|), \M_2(\lambda_{\not \succeq \bv}, \bw)+s\} & \textup{by (C1)}\\
&= \max\{ \tau_2(|\lambda|), \tau_2(|\lambda_{\not \succeq \bv}|) + s\} & \textup{by induction assumption}\\
&\leq \max\{ \tau_2(|\lambda|), (\tau_2(|\lambda|) - s) + s\} & \textup{by (\ref{eqn5})}\\
&= \tau_2(|\lambda|).
\end{align*}
Then, by Theorems~\ref{BOUNDS} and~\ref{sameq}, we have
\begin{align*}
\tau_2(|\lambda|) \leq \m_2(\lambda) \leq \M_2(\lambda, \bu) \leq \tau_2(|\lambda|),
\end{align*}
so we have \(\m_2(\lambda) = \M_2(\lambda, \bu) = \tau_2(|\lambda|)\) as desired, completing the proof.
\end{proof}

As \(\kappa \times \nu \cong \llbracket m, n \rrbracket\) when \(\kappa\), \(\nu\) are totally ordered sets of cardinality \(m,n\) respectively, we have the immediate corollary thanks to Lemma~\ref{isomq}:

\begin{Corollary}\label{maincor}
Let \(\kappa, \nu\) be finite totally ordered sets, with \(|\kappa| \leq 6\) or \(|\nu| \leq 6\). Then
\begin{align*}
\textup{\(\m_2(\kappa \times \nu)\)} = \begin{cases}
9 & \textup{if }|\kappa|=|\nu| =6;\\
\tau_2(|\kappa| |\nu|) & \textup{otherwise}.
\end{cases}
\end{align*}
\end{Corollary}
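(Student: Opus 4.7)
The plan is to obtain this corollary as an essentially immediate consequence of Theorem~\ref{mainthm} by passing along a poset isomorphism. Write $m := |\kappa|$ and $n := |\nu|$. Since $\kappa$ and $\nu$ are finite totally ordered sets, Example~\ref{TotEx} supplies order-preserving bijections $\kappa \bijection [1,m]$ and $\nu \bijection [1,n]$. Taking the product of these two maps produces mutually inverse order-preserving maps between $\kappa \times \nu$ and $[1,m] \times [1,n] = \llbracket m,n \rrbracket$ under the product partial order, so $\kappa \times \nu \cong \llbracket m,n \rrbracket$ as posets.

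With the isomorphism in hand, I would invoke Lemma~\ref{isomq}, which guarantees that the $\m_k$-function is an invariant of the isomorphism class of the poset; applied with $k=2$ this gives $\m_2(\kappa \times \nu) = \m_2(\llbracket m,n \rrbracket)$. The hypothesis that $|\kappa| \leq 6$ or $|\nu| \leq 6$ translates directly to $m \leq 6$ or $n \leq 6$, placing $\llbracket m,n \rrbracket$ squarely in the regime covered by Theorem~\ref{mainthm}. Reading off the two cases of that theorem yields $\m_2(\llbracket m,n \rrbracket) = 9$ when $m=n=6$ and $\m_2(\llbracket m,n \rrbracket) = \tau_2(mn) = \tau_2(|\kappa||\nu|)$ otherwise, which is exactly the piecewise formula claimed.

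There is no real obstacle here: the substantive content lives in Theorem~\ref{mainthm} (and hence in the correctness verification of Algorithm~\ref{Alg}), while the corollary merely records that the answer depends only on the cardinalities of the two totally ordered factors, a fact already packaged into Lemma~\ref{isomq}. The proof is therefore a short chain of observations rather than an argument with a delicate step.
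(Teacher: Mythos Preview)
Your proposal is correct and matches the paper's own argument essentially verbatim: the paper also simply notes that \(\kappa \times \nu \cong \llbracket m,n \rrbracket\) and invokes Lemma~\ref{isomq} to deduce the corollary immediately from Theorem~\ref{mainthm}.
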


\subsection{A conjecture}\label{conjsec}
We end with a conjectural bound for product posets of totally ordered sets.

\begin{Conjecture}\label{conj}
Let \(m,n \in \NN\). Then
\(
\m_2(\llbracket m,n \rrbracket) \leq \tau_2(mn) + 1.
\)
\end{Conjecture}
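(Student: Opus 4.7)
The plan is to prove Conjecture~\ref{conj} by strengthening it to all finite lower sets (Young diagrams) $\lambda \subseteq \NN^2$ under the product order; the strengthened claim is that $\m_2(\lambda) \leq \tau_2(|\lambda|) + 1$ for every such $\lambda$. Since $\llbracket m,n \rrbracket$ is itself a Young diagram, the original conjecture follows as a special case. Importantly, the class of finite lower sets in $\NN^2$ is closed under the operation $\lambda \mapsto \lambda_{\not\succeq v_1}$ for any $v_1 \in \lambda$, which makes induction on $N = |\lambda|$ feasible. The base cases $N \leq 1$ are immediate.

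For the inductive step, set $t := \tau_2(N)$. The strategy is to locate an element $v_1 \in \lambda$ satisfying
\[
N - T_2(t-1) \;\leq\; |\lambda_{\succeq v_1}| \;\leq\; t + 1.
\]
The lower bound is equivalent to $|\lambda_{\not\succeq v_1}| \leq T_2(t-1)$, so Lemma~\ref{increasing} together with Lemma~\ref{kappaTri} yields $\tau_2(|\lambda_{\not\succeq v_1}|) \leq t - 1$, and the induction hypothesis gives $\m_2(\lambda_{\not\succeq v_1}) \leq t$. Taking $\bw$ to be an optimal $\lambda_{\not\succeq v_1}$-strategy, Lemma~\ref{truncatelem} and Theorem~\ref{sameq} then produce
$\m_2(\lambda) \leq \M_2(\lambda, v_1\bw) = \max(|\lambda_{\succeq v_1}|, \m_2(\lambda_{\not\succeq v_1})+1) \leq t+1$, as desired.

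The main obstacle is the existence claim for $v_1$. The target interval $[N - T_2(t-1), t+1]$ is nonempty of length $T_2(t) - N + 2 \geq 2$, but the integer-valued map $v \mapsto |\lambda_{\succeq v}|$ on $\lambda$ can skip values as $v$ varies: moving $v$ across a covering relation of $\lambda$ changes $|\lambda_{\succeq v}|$ by the size of the newly-included row or column segment, which can be arbitrarily large when $\lambda$ contains wide rows or tall columns. A natural attempt is a greedy descent starting from a maximal element of $\lambda$, at each step choosing a cover that minimizes the jump in $|\lambda_{\succeq v}|$; the hope is that the first element $v_1$ along this path with $|\lambda_{\succeq v_1}| \geq N - T_2(t-1)$ will also satisfy $|\lambda_{\succeq v_1}| \leq t+1$. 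Proving this rigorously will likely require a delicate case analysis based on the shape of $\lambda$ and number-theoretic control of $N$ modulo small moduli, reminiscent of the arguments in Section~\ref{TriResults}; in edge cases (e.g., when $N$ is itself a triangular number and the interval has length only $2$), one may be forced to replace the single element $v_1$ with a multi-step initial sequence $\bv$ analogous to the staircase sequences appearing in Algorithm~\ref{Alg}, and apply the full version of Lemma~\ref{truncatelem}.
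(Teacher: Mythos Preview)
This statement is presented in the paper as an open \emph{conjecture}; the paper offers no proof, only heuristic support in \S\ref{conjsec}. There is thus no paper proof to compare against, and your submission is, by your own description, a plan rather than a proof.

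The gap you flag is genuine and not easily closed. For a concrete failure of the single-$v_1$ step inside your enlarged class of Young diagrams, take the staircase $\lambda=\{(i,j)\in\NN^2 : i+j\le 8\}$, so $N=28$, $t=\tau_2(28)=7$, and the target interval is $[28-T_2(6),\,8]=[7,8]$. For every $v\in\lambda$ the upper set $\lambda_{\succeq v}$ is again a staircase, hence $|\lambda_{\succeq v}|\in\{1,3,6,10,15,21,28\}$, which misses $[7,8]$ entirely. Your suggested fallback to a multi-step initial segment $\bv$ is therefore mandatory already in this small example, yet you give no construction or existence argument for such $\bv$ in general; the vague appeal to ``delicate case analysis'' is not a proof.

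There is also a structural issue. Your induction requires the bound $\m_2(\lambda')\le\tau_2(|\lambda'|)+1$ for the typically \emph{non-rectangular} diagram $\lambda'=\lambda_{\not\succeq\bv}$, so you are really attempting to prove a strictly stronger statement than Conjecture~\ref{conj}. That strengthening is itself open: the paper's evidence (Theorem~\ref{mainthm} and the computations mentioned in \S\ref{conjsec}) concerns only rectangles, and says nothing about general lower sets of $\NN^2$. Before this induction scheme can succeed you would need independent evidence that the strengthened claim is even true, let alone a uniform way to produce the initial sequence $\bv$.
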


This suggests \(\m_2(\llbracket m,n \rrbracket) \in \{\tau_2(mn), \tau_2(mn) + 1\}\) for all \(m,n \in \NN\). 
By Theorem~\ref{mainthm}, the posets \(\llbracket m,n \rrbracket\) obey this claim when \(m \leq 6\) or \(n \leq 6\). In fact, all but \(\llbracket 6,6 \rrbracket\) have the minimal possible value \(\m_2(\llbracket m,n \rrbracket)=\tau_2(mn)\) allowed by Theorem~\ref{BOUNDS}. 
Moving beyond these results, computations show that exceptional cases like \(\llbracket 6,6 \rrbracket\), where no \(\lambda\)-strategy \(\bu\) can be found that realizes \(\M_2(\lambda, \bu) = \tau_2(|\lambda|)\), seem to occur fairly rarely (the poset \(\llbracket 15,20 \rrbracket\) is another). But allowing for a \(\lambda\)-strategy that realizes \(\M_2(\lambda, \bu) = \tau_2(|\lambda|) + 1\) instead seems to afford so much flexibility that we expect such a \(\lambda\)-strategy can always be found, even in these exceptional cases. For instance, while there are no \(\llbracket 6,6 \rrbracket\)-strategies that realize \(\M_2(\llbracket 6,6 \rrbracket, \bu) = 8\), there are 53,688 distinct \(\llbracket 6,6 \rrbracket\)-strategies which realize \(\M_2(\llbracket 6,6\rrbracket, \bu) = 9\). This is the authors' line of reasoning behind positing Conjecture~\ref{conj}.

\end{document}